\newcommand{\Rmnum}[1]{\expandafter\@slowromancap\romannumeral #1@}
\newtheorem{thm}{Theorem}[section]
\newtheorem{definition}{Definition}[section]
\newtheorem{lemma}[thm]{Lemma}
\newtheorem{proposition}[thm]{Proposition}
\date{\today}
\title{An Optimal Switching Approach for Bird Migration}
\author{{Jiawei Chu$^{a,b,}$}
~,~ King-Yeung Lam$^{b,}$\thanks{Corresponding author:lam.184@osu.edu; KYL is
supported by National Science Foundation Grant DMS-2325195}~,~
Boyu Wang$^{c}$,~~and~
Tong Wang$^{b}$
\\
\small \emph{$^{a}$Department of Applied Mathematics, The Hong Kong Polytechnic University,}\\
\small \emph{ Kowloon,  Hong Kong.}\\
\small \emph{$^{b}$Department of Mathematics, The Ohio State University,}\\
\small \emph{ Columbus, 43210, Ohio, USA.}\\
\small \emph{$^{c}$School of Mathematics, Fudan University,}\\
\small \emph{ Shanghai 200433, China}}
\begin{document}

\maketitle

\begin{abstract}

    Bird migration is an adaptive behavior ultimately aiming at optimizing survival and reproductive success. We propose an optimal switching model to study bird migration, where birds' migration behaviors can be efficiently modeled as switching between different stochastic differential equations. For individuals with perfect information regarding the environment, we implement numeric methods to see the expected payoff and corresponding optimal control. For individual with only partial information of the environment, we combine the finite difference method and stochastic simulations to investigate the change of the bird's optimal strategy. Based on biological backgrounds, we characterizing the optimal strategies of birds under different scenarios and these behaviors depend on the specific assumptions of the model.
\end{abstract}

\section{Introduction}\label{sec:1}

\subsection{Biological setting and general questions}

Migration is a seasonal movement of organisms between seasonal favorable regions. Bird migration, in particular, is remarkable for their long distances, geological barriers involved and predation risks. Each year, billions of birds driven by the optimal living habitats take on journeys that span up to thousands of kilometers, across ocean and continents. Avoiding \textit{intraspecific competition} and exploiting crucial resources in seasonal favorable regions
for breeding were usually considered as two main elements for evolution of bird migration \cite{salewski2007evolution}. Various bird species show a wide range of complex migratory behaviors. For example, different species show both high and low route repeatability, consistency and variation in duration of migration \cite{vardanis2016consistency}, and different migratory strategies between spring and fall migration \cite{nilsson2013differences}. The route and schedule of migration are affected by many factors, including the presence of geological barriers, predation risk \cite{ydenberg2007effects}, 
global warming \cite{Jenni2003timing}, access to climate information at the terminal site \cite{carneiro2020linking}, and population density dependence \cite{gourley2015age}. Although bird migrations exhibit complex behaviors, they all serve the same purpose: to optimize survival probability and reproductive success. {Stochastic optimal control, an effective tool for modeling population dynamics in biological/ecological systems \cite{lenhart2007optimal,yoshioka2019stochastic}, is usually employed to model bird migration \cite{alerstam2011optimal, houston1999models, mangel2015stochastic, purcell2007factors}.} 


\medskip

\subsection{Stochastic optimization approach}

{Optimal control theory is {ubiquitous in} theoretical and applied ecology. It predicts how an individual agent can navigate a set of risks and reward to maximize a payoff functional, which includes the reproductive gain and the survival rate in a migration process. In general, a type of behavior maximizing the payoff functional will tend to increase in frequency in the population if they have a heritable component through genetics or learning. The payoff functional is often stochastic (e.g., depending on the mean arrival time at the destination following a diffusion process), and is expressed in terms of an expectation. In such a case, stochastic dynamic programming (SDP) can be used to reformulate the stochastic optimization problem into a set of deterministic partial differential equations,  to be solved backward in time. Here we follow the convention in ecology and resource management to refer to both the model itself and the solution approach as SDP.

SDP has been widely used in population dynamics \cite{lenhart2007optimal,yoshioka2019stochastic}, {particularly in} the study of bird migration \cite{alerstam2011optimal, houston1999models, mangel2015stochastic,purcell2007factors}.
{Conventional bird migration models based on SDP \cite{purcell2007factors} are usually discretized in space, using a large number of variables to characterize the migration process, which often makes them analytically intractable. For continuous-time models, solving a stochastic optimization problem reduces to analyze the underlying Hamilton-Jacobi-Bellman (HJB in short) equation \cite[Chapter 9]{Fleming-2006-book} by using analytical method (e.g.,\cite{zhu-analytical}) or numerical techniques (e.g., \cite{Parzani-mumerical}), and the optimal feedback control may be obtained as a byproduct.} 

\subsection{Our objective}

The objective of this paper is to propose an optimal switching model of bird migration with stopover decision based on SDP. The optimal switching model \cite[Chapter 5]{Pham-2009-book} is a decision-making framework in which individuals switch between two or more diffusion processes to maximize objective function. In our model, the migratory birds switch between three states: ``detour", ``direct flight", and ``resting at a stopover site", each is governed by a separate stochastic differential equation (see Section \ref{sec:2}). 
The optimal switching strategy in our model (location and timing to switch between states) {is derived by} {studying} a deterministic system of {HJB} variational inequalities { through both analytical and numerical framework, see Section \ref{sec:3} and \ref{sec:4}, respectively. Finally, in Section \ref{sec:5}, we apply this optimal switching model to answer several specific biological questions by characterizing the optimal strategies of birds under different scenarios.}

\subsection{Applications of optimal control in our model}

There are many interesting biological questions arising from animal migration. 
The first question we discuss in this paper is the effect of the deterioration of stopover site. Climate change can dramatically reshape the migration behaviors of birds \cite{alerstam2011optimal, ward2009change,walther2002ecological}. 
 Extensive research has demonstrated the profound influence of temperature increase on bird migration choice \cite{ward2009change,butler2003disproportionate}. Furthermore, 
 the loss of wetland due to increasing temperature \cite{bickford2010impacts} can greatly reduce the number of options of migratory shorebirds which depend heavily on the stopover wetlands along the coast for food supplies. Hence, it is natural to ask that
 \begin{itemize}
 \item Can we quantify and analyze the change in their migratory payoff as the quality of major stopover sites decline, and predict the corresponding change in the migration strategy?
 \end{itemize}
We shall explore this question and present our simulation results in Subection 5.1.

Another important factor influencing the migration process is the weather dynamics \cite{carneiro2020linking}. Instead of focusing on the stopover site, we will place more emphasis on the effect of the stochastic weather condition happening at the terminal site. 
However, in nature, birds can anticipate environmental change based on past experience and perception \cite{carneiro2020linking,carneiro2019faster} but only to a limited degree. In particular, they may not be aware of every spontaneous change of climate, especially the condition at the terminal site when it is far away. To this end, we consider the effect of imperfect versus perfect information about the weather condition at the terminal site. Based on the principle of adaptation, we assume that birds have formed a stable strategy according to the average condition of the weather dynamics at the terminal site, which is based on the past experience. But the actual quality of the terminal site may contain unpredictable daily fluctuation due to stochasticity, or be shifted in time due to global climate change. These factors lead to potential mismatch of the perceptual quality of the terminal site versus its actual quality that the bird experiences that particular year. 

As such, we will additionally address these two questions in Subection 5.2:
\begin{itemize}
\item Will only knowing the information about the averaged condition at the terminal site decrease the expected payoff significantly if we compare with the same situation but with perfect information? 
\item How will the migratory payoff be impacted if the optimal arrival date (or peak green-timing) deviates significantly from past experience, due to the global climate change \cite{robertson2024decoupling}?
\end{itemize}

\section{Mathematical modeling}\label{sec:2}

In this section, we formulate the migration dynamics and the objective function to be optimized. 
\subsection{Basic setting}
\begin{wrapfigure}{r}{.3\textwidth}
\begin{center}
\includegraphics[ width=\linewidth]{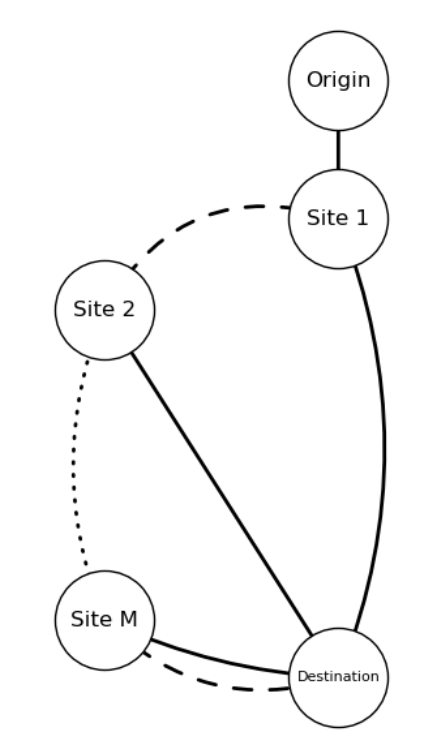}
        \caption{Geographical setting of the migration route.}
        \label{fig:1b}
\end{center}

\end{wrapfigure}
We model the bird migration process as a stochastic optimal switching problem. In this model, individuals switch between different states (representing detour flight, direct flight and waiting) where they are governed by a different drift-diffusion process. A value function $V$ representing the expected payoff function when optimal control is exercised. To evaluate $V$, we derive a system of variational inequalities and prove that $V$ is the unique viscosity solution of the system of variational inequalities (see Sections \ref{sec:2.4} and \ref{sec:3}). 

The essential idea of the model can be presented by a simple qualitative example of a migrating Pacific brant, which travels more than 5000 km from breeding ground at Izembek Lagoon on the Alaska Peninsula to their wintering ground in Southern Baja California or mainland Mexico \cite{purcell2007factors}. Each individual has a $T$-day period to complete the migration to its terminal site. For simplicity, we consider that the bird can not survive unless it reaches the terminal site.

For the model, we model the Pacific as a linear interval $[0,L]$ (with $x=0$ being the starting location, and $x=L$ being the destination). A typical individual can {switch} between the two states: ``detour flight" or ``wait" as it moves along the interval $[0,L]$, where ``wait" is only possible at one of the $M$ stopover sites. Additionally, as individual departs from one of the $M$ stopover sites, it may choose to embark on a direct flight across the Pacific ocean towards the terminal site (Baja Peninsula), which is modeled by the state ``direct flight" which has a greater mean speed and variability comparing to the ``detour flight". See Figure \ref{fig:1b} for an illustration. Instead of using a two-dimensional  graph, we model the choice of direct flight by the fact that an individual cannot swtich back to ``detour flight" or ``wait" once it adopts a ``direct flight", and the higher mean speed also reflects that fact that the physical distance of a direct flight (crossing the ocean) is smaller than a detour flight (along the coast).



The whole switching process is shown in Figure \ref{fig:2}. Here we label the $i$-th stopover sites by the interval $[L_i-\epsilon, L_i+\epsilon]$.  For x $\in [0, L_1-\epsilon)$, where the bird still migrate within the continent and do not reach the ocean, the individual bird continues in the state ``detour flight" until it reaches the first stopover site $[L_1-\epsilon, L_1+\epsilon]$ (representing the first staging area near the ocean). At this point, it can choose either to detour along the coast or to take direct flight over the ocean. If it adopts a direct flight, then it switches to the respective diffusion process which does not terminate until it arrives at the destination. Otherwise, it continues to the next stopover site and the process repeats itself. 
         

\begin{figure}[h]
    \centering
    \includegraphics[width=0.7\textwidth]{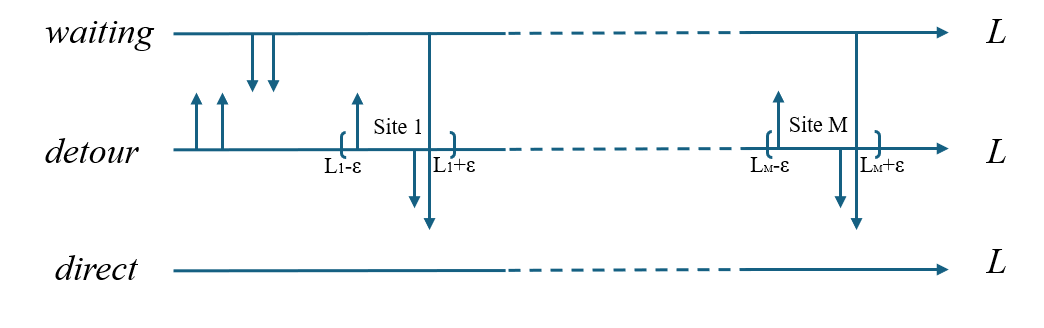}
    \caption{The three states of an individual (waiting/detour/direct) with switching control}
    \label{fig:2}
\end{figure}

\subsection{Migration dynamics}

We translate the bird migration into mathematical framework for optimal switching problem. Denote $L$ as the  distance between starting point and terminal site, and $T<\infty$ as the terminal time for bird migration, then we consider an optimal switching problem on a finite horizon.  Fix a probability space $(\varOmega, \mathcal{F}, P)$ with a filtration {\color{black}$\mathbb{F} = (\mathcal{F}_{s})_{s \geqslant t}$} satisfying the {{\em usual}} conditions (see, e.g. \cite{Fleming-2006-book,Pham-2009-book}). The regime space 
\begin{equation*}
\Pi= \Pi_m = \{1,2,....,m\}
\end{equation*}
(with $m=3$ in our case), and describes the state that a bird is staying in. Then denote $(t,x,i)\in [0,T]\times[0,L]\times\Pi$ as the initial conditions, which describes the exact position $x$ of an individual bird at time $t$ and regime $i$.

A switching control is a double sequence $\alpha = (\tau_n, \iota_n)_{n \geqslant 1}$, where $(\tau_n)$ are an increasing sequence of stopping times, $\tau_n \rightarrow \infty$ as $n\rightarrow\infty$ a.s., denote the decision on “when to switch”; $\iota_n$ are $\mathcal{F}_{\tau_n}$-measurable valued in $\Pi$, represent the new value of the regime during time $[\tau_n, \tau_{n+1})$.  The set of all switching controls is denoted as $\mathcal{A}$. Given the initial regime $i$, we can define the controlled switching process among different states as follows:
\begin{equation}\label{control}
\pi_s := \pi(s,i)=\sum_{n \geq 0} \iota_n \chi_{[\tau_n,\tau_{n+1})}(s), s \geq t, \quad \pi_{t^{-}} = i,
\end{equation}
where $\chi_{[\tau_n,\tau_{n+1})}$ is the indicator function, defined by
$$\chi_{B}
:=\chi_{B}(y)=\begin{cases}
1, &\text{if}~y\in B,\\
0, &\text{otherwise};
\end{cases}
$$
and we set $\tau_0:=t$, $\iota_0:=i$ and $t\in[0,T)$ is any fixed. Then we use a Markov process $\{X^{t,x,i}_s\}_{s\geq t}$ to represent the whole migration, starting from position $x$ and the state $i$ at an initial time $t$, which satisfies the following stochastic differential equation
\begin{equation}\label{migration}
\begin{split}
    dX_s^{t,x,i}&=\sum_{j=1}^3v_j\chi_{\{j=\pi_s\}}ds+\sum_{j=1}^3\sqrt{2\mu_j}\chi_{\{j=\pi_s\}}dB_s,\ \ \text{for}~0\leq t\leq s<T.
\end{split}
\end{equation}
For the remainder of this article, let
the regime state $i=1$ represent the state of detour flight, $i=2$ represent the state of direct flight, and $i=3$ represent the state of waiting, and let $B_{s}$ be a 1-dimensional Brownian motion on the filtered probability space $(\varOmega,\mathcal{F}, \mathbb{F}, P)$ satisfying the usual conditions. The constants $v_j\geq 0$ and $\mu_j\geq 0~(j=1,2, 3)$ represent the velocity and volatility of the regime, respectively, where $v_i$ and $\mu_i$ are constants satisfying
$$
v_2 > v_1 >0 = v_3\quad \text{ and }\quad \mu_1 > \mu_2 > 0 = \mu_3, 
$$
which is motivated by the fact that directly flight shortens the route distance to destination ($v_2>v_1$) while increases stochasticity ($\mu_2>\mu_1$) owing to weather variations, such as wind direction, wind velocity, etc.

Consequently, for any initial condition $(t,x,i)$ and any given control $\alpha\in\mathcal{A}$, the migration dynamics can alternative be written as:
\begin{equation}\label{X12}
dX^{t,x,i}_s = v_{\iota_n} ds + \sqrt{2\mu_{\iota_n}}dB_s, \ \ \pi_s=\iota_n, \forall~s\in[\tau_n,\tau_{n+1}), \ \ n\geq 1,
\end{equation}
where $\pi_s$ is defined in \eqref{control}. When $\iota_n\ne 3$, 
from \cite[Theorem V.38]{Protter-2005-book}, we get that  there exists a unique strong solution valued in $[0,L]$ to the standard SDE \eqref{X12} for any $(t,x)\in[0,T]\times [0,L]$, denote it by $X^{t,x,i}_s$; when $\iota_n=3$, 
we set, for each
$(t,x)\in[0,T]\times [0,L]$, 
\begin{equation}\label{X3}
 X^{t,x,i}_s= X^{t,x,i}_{\tau_n} \quad \text{ for all }s \in [\tau_n, \tau_{n+1}].
\end{equation}

\subsection{Objective function}
The objective function is a function to be maximized by the individual bird through its migration process subject to the migration dynamics formulated in the previous subsection. Here are some key components for the construction of objective function.

\begin{itemize}
\item[(H1)] {\bf Running and terminal reward functions.} 
For $j=1,2,3$, we let $f_j(t,x)=f(t,x,j):~[0,T]\times [0,L]\times\Pi\rightarrow \mathbb{R}$ and  $g_j(t,x)=g(t,x,j):$ $[0,T]\times [0,L]\times \Pi\rightarrow \mathbb{R}$ 
be, respectively, the running and terminal reward functions. We assume that they are uniformly Lipschitz continuous in $(t,x)$.

\item[(H2)] {\bf Mortality/Discount rate.} For $j=1,2,3$, {we} denote the mortality rate associated with birds' action by $\beta_j(t,x)=\beta (t,x,j):~[0,T]\times [0,L]\times\Pi\rightarrow \mathbb{R}$ and assume that it is Lipschitz continuous. In reality, the mortality rate for direct flight is considered higher than that for detour flight, reflecting the increased challenges birds face over the sea, such as the inability to rest periodically and greater exposure to adverse weather conditions.
\item[(H3)] {\bf Switching cost.} We denote the cost to switch from regime $k$ to $j$, by $h_{kj}(t,x):~[0,T]\times [0,L]\rightarrow \mathbb{R}$ for $k,j\in\{1,2,3\}$. For each $k,j$, we assume $0\leq h_{kj}(t,x)\in C^{1,2}([0,T]\times[0,L])$, where the equality holds iff $k=j$. Moreover, suppose that the function $h_{kj}(t,x)$ satisfy
\begin{equation}\label{hc1}
h_{kj}< h_{kq}+h_{qj}, \ \  \text{ for }q\ne k,~j, \text{ such that } h_{kq}+h_{qj}<+\infty,
\end{equation}
\noindent and 
\begin{equation}\label{hc2}
h_{2j}= +\infty, \ \ j=1,~3.
\end{equation}
The condition 
\eqref{hc1} means that switching in two steps via an intermediate regime $q$ is more costly than in one step from regime $k$ to $j$; \eqref{hc2} implies that the bird cannot switch from direct flight to detour or waiting state.  
\end{itemize}

The arrival time at $x=L$ is a hitting time of the process:
%
$$\tau=\inf\{s\geq t: X^{t,x,i}_s=L\}.$$
The objective function give the expected benefit, given the current state $(t,x,i)\in [0,T]\times[0,L]\times{\Pi}$ and subject to a given control $\alpha=(\tau_n,\iota_n)_{n\geq 1}\in\mathcal{A}$:
\begin{equation}\label{J}
\begin{split}
   J_i(t,x,\alpha)=E^{t,x,i}\bigg[\int_{t}^{\tau\wedge T}
		& e^{-\beta_{\pi_s}(s, X_{s}^{t,x,i}) (s-t)} f_{\pi_s}(s,X_{s}^{t,x,i})ds -\sum_{ t\leq\tau_n<\tau\wedge T} e^{-\beta_{\pi_s}(\tau_{n}, X_{s}^{t,x,i})(\tau_n-t)} {h_{l_{n-1}l_{n}}}(s,X_s^{t,x,i}) \\
		& + e^{-\beta_{\pi_s}(\tau, X_{\tau}^{t,x,i}) (\tau-t)} g_{\pi_s}(\tau,L)\chi_{\{\tau\leq T\}}\bigg],
\end{split}
\end{equation}
where $X_s^{t,x,i}$ is the unique strong solution to \eqref{migration} subject to the given control process $\alpha$.  Under proper assumptions 
{(see (H1)-(H3))}
$J_i(t,x,\alpha)$ is well-defined for any initial condition $(t,x,i)$ and any control process $\alpha\in \mathcal{A}$. 

Finally, the value function of the optimal control problem is defined as the supremum of the objective function $J_i(t,x,\alpha)$ over the set of all admissible control processes $\mathcal{A}$:
\begin{equation}\label{eq: optimal_control}
\begin{split}    V_i(t,x)=\sup_{\alpha \in \mathcal{A}} J_i(t,x,\alpha), \ \ {(t,x,i)\in[0,T]\times[0,L]\times \Pi.}
\end{split}
\end{equation}
By \eqref{J}, it is natural to set the terminal condition
$$
 V_i(T,x)=0 \quad \text{ for }x\in[0,L),\\
$$
and boundary conditions
$$(V_i)_x(t,0) = 0, \quad V_i(t,L)=g_i(t,L),  \quad \text{ for }t\in[0,T],
$$
which says that the diffusion process is reflected at the boundary $x=0$ and is absorbing at $x=L$.

The goal of this optimal switching problem is to find a strategy $\alpha^*=(\tau^*_n,\iota^*_n)_{n\geq 1}\in\mathcal{A}$ to achieve the maximum of \eqref{eq: optimal_control}, $\alpha^*$ is called the optimal strategy. A key ingredient of the Bellman's approach of stochastic optimal control is the dynamic programming principle, which we now recall. 
\begin{lemma}[Stochastic dynamic programming principle (SDPP)]\label{sdpp}
For any $(t,x,i)\in [0,T]\times[0,L]\times\Pi$, we have
\begin{equation*}
    \begin{aligned}
		V_i(t,x) = \sup_{\alpha \in \mathcal{A}} E^{t,x,i} \bigg[ \int_{t}^{\tau  \wedge \theta } 
		& e^{-\beta_{\pi_s}(s, X_s^{t,x,i}) (s-t)} f(s,X_{s}^{t,x,i},\pi_s)ds -\sum_{t\leq \tau_n <\tau \wedge \theta} e^{-\beta_{\pi_s}(\tau_{n},X_s^{t,x,i}) (\tau_{n}-t)} h_{l_{n-1}l_{n}}(s,X_s^{t,x,i}) \\
		&+e^{-\beta_{\pi_s}(\tau, X_s^{t,x,i}) (\tau-t)} g_i(\tau,L)\chi_{\{\tau<\theta\}}+ e^{-\beta_{\pi_s}(\theta, X_s^{t,x,i}) (\theta-t)} V_{i}(\theta,X_{\theta}^{t,x,i})\chi_{\{\theta\leq{\tau\wedge T}\}} \bigg],
    \end{aligned}
\end{equation*}
 where $\theta\in [t, T]$ is any stopping time, possibly depending on $\alpha\in\mathcal{A}$.
\end{lemma}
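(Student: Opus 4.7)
The plan is to prove Lemma \ref{sdpp} in the standard fashion for the dynamic programming principle, by establishing the two inequalities $V_i(t,x) \leq (\text{RHS})$ and $V_i(t,x) \geq (\text{RHS})$ separately.

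For the upper bound, I would fix an arbitrary $\alpha=(\tau_n,\iota_n)_{n\geq 1}\in\mathcal{A}$ and stopping time $\theta\in[t,T]$, split the reward in \eqref{J} at the stopping time $\theta\wedge\tau$, and then condition on $\mathcal{F}_\theta$. The strong Markov property of the controlled diffusion $X^{t,x,i}$ is applicable because the coefficients in \eqref{migration} are regime-constant on each interval $[\tau_n,\tau_{n+1})$, so the continuation reward on $\{\theta\leq\tau\}$ turns into a conditional expectation that has the form of $J$ at the fresh initial data $(\theta,X_\theta^{t,x,i},\pi_\theta)$ under a shifted control. Bounding this pointwise by the value function at that data and taking the supremum over $\alpha\in \mathcal{A}$ yields the $\leq$ direction.

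The lower bound is the harder direction and requires an $\epsilon$-optimal concatenation argument. Fix $\epsilon>0$; for each starting triple $(s,y,j)$, pick a control $\alpha^{\epsilon,s,y,j}\in\mathcal{A}$ with $J_j(s,y,\alpha^{\epsilon,s,y,j})\geq V_j(s,y)-\epsilon$. Given any $\alpha\in\mathcal{A}$ and $\theta$, define the concatenation $\tilde\alpha$ that follows $\alpha$ on $[t,\theta)$ and switches to $\alpha^{\epsilon,\theta,X_\theta^{t,x,i},\pi_\theta}$ on $[\theta,T]$. Computing $J_i(t,x,\tilde\alpha)$ and applying the tower property yields $V_i(t,x)\geq (\text{RHS})-\epsilon$; sending $\epsilon\to 0$ and taking suprema over $\alpha$ and $\theta$ completes the inequality.

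The hard part will be the measurability issue inside this concatenation: a naive pointwise choice of $\epsilon$-optimal controls indexed by the random data $(\theta,X_\theta^{t,x,i},\pi_\theta)$ need not be jointly $\mathcal{F}_\theta$-measurable, so $\tilde\alpha$ may fail to be admissible. I would handle this either by a Jankov--von Neumann type measurable selection applied to the graph of the $\epsilon$-optimal set (after verifying analyticity of $(s,y)\mapsto V_j(s,y)$ through \eqref{eq: optimal_control}), or more concretely by first proving continuity of $V_j$ using the Lipschitz hypotheses in (H1)--(H3), standard moment bounds for the SDE \eqref{X12}, and the switching-cost inequalities \eqref{hc1}--\eqref{hc2}, and then approximating the $\epsilon$-optimal selection by a locally constant map on a sufficiently fine finite partition of $[0,T]\times[0,L]\times\Pi$. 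This technical content is precisely the routine-but-nontrivial argument underlying \cite[Lemma 4.2]{Fleming-2006-book}.
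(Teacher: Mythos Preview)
Your outline is the standard two-inequality proof of the dynamic programming principle and is correct in approach; the paper itself does not give any argument for Lemma~\ref{sdpp} but simply refers the reader to \cite[Lemma~4.2]{Fleming-2006-book}, which is exactly the reference you invoke at the end of your proposal. In that sense your proposal and the paper are aligned: both defer the technical content to Fleming--Soner, with your write-up additionally sketching what that argument contains.
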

\begin{proof}
    See, e.g. \cite[Lemma 4.2]{Fleming-2006-book}.
\end{proof}

\subsection{Hamilton-Jacobi-Bellman variational inequality}
\label{sec:2.4}

To find the optimal control $\alpha^*$, the crucial step is to show, based on the the dynamic programming principle (see Lemma \ref{sdpp}) and the concept of viscosity solutions (see Definition \ref{dvs}), that $(V_i(t,x))_{i\in\Pi}$ is the unique viscosity solution to the following system of Hamilton-Jacobi-Bellman (HJB in short) variational inequality with terminal and lateral conditions:
\begin{equation}\label{HJB12}
    \begin{cases}
        \min[-\partial_tV_{i} + \beta_i V_i - \mathcal{L}_iV_i - f_i, V_i - \max_{i \neq j}(V_j - h_{ij})] = 0, &(t,x)\in (0,T)\times(0,L),\\
     {(V_i)_x}(t,0)=0, &t\in[0,T],\\
     V_i(t,L)=g_i(t,L),   &t\in[0,T],\\
      V_i(T,x)=0,  &x\in[0,L),
    \end{cases}
\end{equation}
where  $i\in\{1,2,3\}$ and  
\begin{equation}\label{e.a.L}
\mathcal{L}_i \phi:=v_i\phi_x+\mu_i\phi_{xx} 
\end{equation}
(see details in Proposition \ref{cvs} in Section 3).

This system of PDE facilitates the identification of the associated optimal control $\alpha^*$. Precisely, 
{for} any $i\neq j$, we define the switching region to be the closed set
\begin{equation}\label{SIJ}
    \mathcal{S}_{ij} = \{(t,x)\in (0,T)\times (0,L): V_i(t,x) = (V_j - h_{ij})(t,x)\},
\end{equation}
{which means that if an individual in state $i$ is located at location $x$ and time $t$, such that $(t,x) \in \mathcal{S}_{ij}$, then it will switch from regime $i$ to regime $j$. Define
\begin{equation}\label{SI}
\mathcal{S}_i = \cup_{j\neq i} \mathcal{S}_{ij} \quad \text{ and }\quad \mathcal{C}_i = [0,T]\times [0,L] \setminus \mathcal{S}_i.
\end{equation}
It follows from \cite[Lemma 4.2]{Pham-2007-chapter} that
$$\mathcal{S}_{i} = \{(t,x)\in (0,T)\times (0,L): V_i(t,x) = \max_{j \neq i}(V_j - h_{ij})(t,x)\},$$
then
$$\mathcal{C}_i= \{(t,x)\in [0,T]\times [0,L]: V_i(t,x) > \max_{j \neq i}(V_j - h_{ij})(t,x)\}.$$
By Proposition \ref{os}, we get that $\mathcal{S}_i$ is the region where changing the regime $i$ is optimal. If} $(t,x) \in \mathcal{C}_i$, then it is optimal to stay in regime $i$ at least for a short time (Proposition \ref{os}). Hence, $\mathcal{C}_i$ is called the continuation region. 


Moreover, by \eqref{hc1}, the switching regions are disjoint, so that $$\mathcal{S}_{ij}\subset  \mathcal{C}_j, \ \ j\ne i\in\Pi.$$
 Roughly speaking, an individual who has just switched  from regime $i$ to $j$ does switch immediately from $j$ to another regime. 
When $i=2$, { assumption (H3)implies} that there is no switch, this gives $\mathcal{S}_2=\emptyset$ and $\mathcal{C}_2=[0,T]\times[0,L]$.

Consequently, the optimal strategy for an individual bird/agent can be fully characterized by the switching regions $\mathcal{S}_{12},\mathcal{S}_{13},\mathcal{S}_{31},\mathcal{S}_{32}$, as given in \eqref{SIJ}.
\section{Theory}\label{sec:3}
In this section, we first give the definition of vicosity solution, and then we shall show that $(V_i(t,x))_{i\in\Pi}$ is the unique viscosity solution to \eqref{HJB12}
(see Propostion \ref{cvs}). This result enables us to get the associated optimal control $\alpha^*$ (see Proposition \ref{os}).
\vspace{2mm}

 Let $t\in[0,T]$ and $\mathcal{O}\subset\mathbb{R}^{n}~(n\geq 1)$ be an open connected domain with closure $\overline{\mathcal{O}}$ and smooth boundary $\partial\mathcal{O}:=\overline{\mathcal{O}}\verb|\| \mathcal{O}$ satisfying  the exterior ball condition. Let $\partial \mathcal{O}=\Gamma_1\cup\Gamma_2$ with $\Gamma_1$ being open set and $\Gamma_2$ being closed set.  Consider the following general system of HJB variational inequality with mixed boundary conditions:
{
\begin{equation}\label{Gsystem}
\begin{cases}
     \min[-\partial_tu_{i} + F_i(t,x,u_i,Du_i,D^2u_i), u_i - \max_{i \neq j}(u_j - h_{ij})] = 0, &(t,x)\in (0,T)\times\mathcal{O},\\
     \partial_\nu u_i(t,x)=\phi_i(t,x), &(t,x)\in(0,T)\times \Gamma_1,\\
     u_i(t,x)=d_i(t,x), &(t,x)\in[0,T)\times \Gamma_2,\\
     u_i(T,x)=\psi_i(x),& x\in \overline{\mathcal{O}},
\end{cases}
\end{equation}
}
 where $F_i(t,x,u_i,Du_i,D^2u_i):=\beta_i u_i - \mathcal{L}_iu_i - f_i$ {with $\mathcal{L}_i$ defined in \eqref{e.a.L}} and $\partial_\nu u_i:=\frac{\partial u_i}{\partial \nu}$ with $\nu$ denoting the unit outward normal vector of boundary $\Gamma_1$. We suppose that
\begin{itemize}
\item [(Ha)] Nonnegative functions $\beta_j(t,x)$ and $f_j(t,x)$: $[0,T]\times \overline{\mathcal{O}}\times\Pi\rightarrow \mathbb{R}$ are uniformly Lipschitz continuous in $(t,x)$; the switching cost $0\leq h_{kj}(t,x)\in C^{1,2}([0,T]\times\overline{\mathcal{O}}):~[0,T]\times \overline{\mathcal{O}}\rightarrow \mathbb{R}$ for $k,j\in\Pi$ satisfies \eqref{hc1}, where ``='' holds iff $k=j$.
\item [(Hb)] For each $i\in\Pi$, $\phi_i(t,x)$ and $d_i(t,x)$ are continuous on $[0,T]\times\Gamma_1$ and $[0,T]\times\Gamma_2$, respectively. The function $\psi_i(x):\overline{\mathcal{O}}\times\Pi\rightarrow \mathbb{R}$ is continuous such that 
$$\psi_i(x)\geq \max_{i \neq j}(\psi_j(x) - h_{ij}(T,x)).$$
\end{itemize}
 
Under assumptions (Ha)-(Hb), we can introduce the concept of viscosity solutions. We first recall some fundamental notations. Given a locally bounded function $u(t,x):[0,T]\times\overline{\mathcal{O}}\rightarrow~\mathbb{R}$ (i.e., for all $(t,x)\in [0,T]\times \overline{\mathcal{O}}$, there exists a compact neighborhood $V_{(t,x)}$ of $(t,x)$ such that $u$ is bounded on $V_{(t,x)}$). We recall that its lower-semicontinuous envelope $u_*$ and upper-semicontinuous envelope $u^*$ on $[0,T]\times \overline{\mathcal{O}}$ (see \cite[pp.267, Definition 4.1]{Fleming-2006-book}) are given  respectively by
\begin{equation*}
\begin{split}
 &u_*(t,x):=\liminf_{(s,y)\rightarrow (t,x)} u(s,y):=\sup\limits_{r>0}\text{inf}\{u(s,y)\in[0,T]\times \overline{\mathcal{O}}\cap B_r(t,x)\},\\
 &u^*(t,x):=\limsup_{(s,y)\rightarrow (t,x)}u(s,y):=\inf\limits_{r>0}\text{sup}\{u(s,y)\in[0,T]\times \overline{\mathcal{O}}\cap B_r(t,x)\},
\end{split}
\end{equation*}
i.e., $u_*$ (resp. $u^*$) is the largest (resp. smallest) lower-semicontinuous function (l.s.c.) below (resp. upper-semicontinuous function (u.s.c.) above) $u$ on $[0,T]\times \mathcal{O}$. Observe that $u(s,y)$ is continuous at $(t,x)\in[0,T]\times \mathcal{O}$ if and only if $u(t,x)=u_*(t,x)=u^*(t,x)$

 Next, we give the definition of viscosity solution as Definition 4.2 and Remark 4.2 in \cite[ pp.267]{Fleming-2006-book} or Definition 4.2.1 in \cite{Pham-2009-book}, which is equivalent to \cite[Definition 2.3]{2022-AMO-Olofsson} or \cite[Definition 3.3]{Hamadene-2023-JMAA} by applying Lemma 4.1 in \cite[pp.211]{Fleming-2006-book}.

\begin{definition}[Viscosity solution]\label{dvs}
  Let $u_i(t,x)$ ($i\in \Pi$) be locally bounded. Then function $(u_i)_{i\in\Pi}$ is a viscosity subsolution of \eqref{Gsystem}, if for all $i\in \Pi$ and any $W\in C^{1,2}([0,T]\times \overline{\mathcal{O}})$, 
   \begin{itemize}
    \item[(i)] {$u_i^*(t,x)-W (t,x)$ admits a  maximum point $(\bar{t},\bar{x})\in[0,T)\times\overline{\mathcal{O}}$, and
    $$\min[-\partial_tW + F_i(t,x,u_{i}^*,DW,D^2W), u_{i}^* - \max_{i \neq j}(u_{j}^* - h_{ij})]\leq 0 $$
    holds for all $(\bar{t},\bar{x})$;}
    \item[(ii-1)] {$u_i^*(t,x)-W (t,x)$ admits a  maximum point $(\bar{t},\bar{x})\in[0,T)\times\Gamma_1$ and
    $$\min[-\partial_tW + F_i(t,x,u_{i}^*,DW,D^2W), u_{i}^* - \max_{i \neq j}(u_{j}^* - h_{ij})]\wedge [\partial_\nu u_i^*-\phi_i)]\leq 0 $$
    holds for all $(\bar{t},\bar{x})$;}
    \item[(ii-2)] $$u_{i}^*(t,x)-d_i(t,x)\leq 0 $$
    holds for all $(t,x)\in[0,T)\times\Gamma_2$;
    \item[(iii)] $$u_i^*(T,x)\leq \psi_i(x) $$
    holds for all $x\in\overline{\mathcal{O}}$;
   \end{itemize}
   A viscosity supersolutions are defined analogously by replacing ($u_k^*$, $\leq$, $\wedge$) with ($(u_{k})_*$, $\geq$, $\vee$), respectively, where $a\wedge b=\min\{a,b\}$ and $a\vee b=\max\{a,b\}$. A function is a viscosity solution of \eqref{Gsystem} if it is both a viscosity subsolution and viscosity supersolution \eqref{Gsystem}.
\end{definition}

The comparison principle stated below is derived from \cite[Theorem 2.4]{2022-AMO-Olofsson}; see also the discussion in \cite[pp.75]{Pham-2009-book}.
\begin{lemma}[Comparison principle]\label{cp}Let $t\in[0,T]$ and $\mathcal{O}\subset\mathbb{R}^{n}~(n\geq 1)$ be an open connected set with closure $\overline{\mathcal{O}}$, and smooth boundary $\partial\mathcal{O}:=\overline{\mathcal{O}}\verb|\| \mathcal{O}$ satisfying the exterior ball condition. Let $\partial \mathcal{O}=\Gamma_1\cup\Gamma_2$ with $\Gamma_1$ being open set and $\Gamma_2$ being closed set.  Assume that (Ha)-(Hb) hold. If $(u_i)_{i\in\Pi}$ and $(U_i)_{i\in\Pi}$ are respectively a visosity subsolution and viscosity supersolution of \eqref{Gsystem}, then for each ${i\in\Pi}$, 
$$ u_i\leq U_i, \ \ \ \forall~(t,x)\in[0,T)\times\overline{\mathcal{O}}.$$
\end{lemma}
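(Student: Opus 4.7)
The plan is to prove the comparison principle by contradiction, combining the doubling-of-variables technique from viscosity solution theory with a backward-induction argument that exploits the strict triangle inequality \eqref{hc1} on switching costs to reduce the obstacle-type system to a single PDE at a well-chosen regime.

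First I would set up the contradiction. Assume
\begin{equation*}
M := \max_{i\in\Pi_m}\sup_{[0,T]\times\overline{\mathcal{O}}}\bigl(u_i^*-(U_i)_*\bigr)>0.
\end{equation*}
By adding a small drift of the form $\lambda(T-t)$ to $U_i$ (absorbed by the non-negativity of $\beta_i$) and a penalization $\gamma|x|^2$ if $\overline{\mathcal{O}}$ is unbounded, one may assume this supremum is attained at some $(\hat t,\hat x)$ with index $i_0$. The Dirichlet condition \eqref{G3} and assumption (Hb), together with the terminal condition \eqref{G4} combined with the obstacle compatibility in (Hb), rule out $(\hat t,\hat x)\in ([0,T]\times\Gamma_2)\cup(\{T\}\times\overline{\mathcal{O}})$. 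So $(\hat t,\hat x)\in[0,T)\times(\mathcal{O}\cup\Gamma_1)$.

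Second, I would use \eqref{hc1} to reduce to a regime in which the PDE inequality is active. Suppose at $(\hat t,\hat x)$ we had $u_{i_0}^*=\max_{j\ne i_0}(u_j^*-h_{i_0 j})$ with maximizer $j^\star$. Since $(U_i)_{i\in\Pi_m}$ is a supersolution, $(U_{i_0})_*\ge (U_{j^\star})_*-h_{i_0 j^\star}$ at $(\hat t,\hat x)$, so
\begin{equation*}
u_{j^\star}^*(\hat t,\hat x)-(U_{j^\star})_*(\hat t,\hat x)\ge u_{i_0}^*(\hat t,\hat x)-(U_{i_0})_*(\hat t,\hat x)=M,
\end{equation*}
forcing equality and pushing the argument to regime $j^\star$. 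Iterating and invoking the strict subadditivity \eqref{hc1} together with the finiteness of $\Pi_m$, this chain cannot cycle, so after finitely many steps we arrive at an index $i^\star$ at which $u_{i^\star}^*(\hat t,\hat x)>\max_{j\ne i^\star}(u_j^*-h_{i^\star j})(\hat t,\hat x)$ strictly. By continuity this strict inequality persists in a neighborhood, so the obstacle part of the subsolution inequality is inactive there and the PDE part $-\partial_t u_{i^\star}^*+F_{i^\star}(t,x,u_{i^\star}^*,Du_{i^\star}^*,D^2u_{i^\star}^*)\le 0$ is the active inequality.

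Third, I would apply the standard doubling-of-variables technique to the single regime $i^\star$. Introduce
\begin{equation*}
\Phi_\varepsilon(t,x,s,y)=u_{i^\star}^*(t,x)-(U_{i^\star})_*(s,y)-\tfrac{1}{2\varepsilon}\bigl(|x-y|^2+(t-s)^2\bigr)-\kappa\bigl(|x-\hat x|^2+(t-\hat t)^2\bigr),
\end{equation*}
with $\kappa>0$ small, localize in a neighborhood of $(\hat t,\hat x)$ where the strict obstacle gap is preserved, and let $(t_\varepsilon,x_\varepsilon,s_\varepsilon,y_\varepsilon)$ denote a maximizer. The standard compactness lemma \cite[Lemma 3.1]{Fleming-2006-book} gives $(t_\varepsilon,x_\varepsilon),(s_\varepsilon,y_\varepsilon)\to(\hat t,\hat x)$ and $\tfrac{|x_\varepsilon-y_\varepsilon|^2+(t_\varepsilon-s_\varepsilon)^2}{\varepsilon}\to 0$. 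The Crandall–Ishii lemma produces parabolic jets with a matching matrix inequality which, combined with the sub/supersolution PDE inequalities at $i^\star$, yields after subtraction
\begin{equation*}
\beta_{i^\star}(t_\varepsilon,x_\varepsilon)u_{i^\star}^*(t_\varepsilon,x_\varepsilon)-\beta_{i^\star}(s_\varepsilon,y_\varepsilon)(U_{i^\star})_*(s_\varepsilon,y_\varepsilon)\le f_{i^\star}(t_\varepsilon,x_\varepsilon)-f_{i^\star}(s_\varepsilon,y_\varepsilon)+o(1).
\end{equation*}
Lipschitz continuity of $\beta_{i^\star},f_{i^\star}$ in (Ha) and non-negativity of $\beta_{i^\star}$ then force $M\le 0$ upon sending $\varepsilon,\kappa\to 0$, a contradiction.

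The hardest step will be handling the Neumann boundary $\Gamma_1$: if $(\hat t,\hat x)\in[0,T)\times\Gamma_1$, the doubled maximizers may stick to the boundary with an outward jet component incompatible with \eqref{G2}, and the obstacle part of Definition \ref{dvs}(ii-1) needs to be exploited in tandem with the PDE part. The standard remedy is to modify the penalty to $\tfrac{1}{2\varepsilon}|x-y-\delta\nu(\hat x)|^2$ for a small $\delta>0$, pushing the maximizer slightly inward and turning the Neumann inequality into a usable sign condition on $x_\varepsilon-y_\varepsilon$; this is exactly the mechanism used in \cite[Theorem 2.4]{2022-AMO-Olofsson}, whose argument I would adapt verbatim for the reduced single-regime problem at $i^\star$, citing it to keep the presentation short.
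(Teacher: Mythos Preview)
The paper does not give its own proof of this lemma: the sentence immediately preceding the statement reads ``The comparison principle stated below is derived from \cite[Theorem 2.4]{2022-AMO-Olofsson} and the discussion in \cite[pp.75]{Pham-2009-book},'' and no argument follows. Your sketch is precisely the standard route those references take---contradiction, the no-loop reduction via strict positivity of $h_{ij}$ to a regime where the PDE part is active, doubling of variables with Crandall--Ishii, and the shifted-penalty trick of \cite{2022-AMO-Olofsson} for the Neumann piece---so there is nothing substantively different to compare.

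Two small points of care if you flesh this out. First, your closing line ``non-negativity of $\beta_{i^\star}$ then force $M\le 0$'' does not by itself yield the contradiction when $\beta_{i^\star}$ may vanish (which (Ha) allows): the contradiction must come from the $\lambda$ you inserted via $U_i\mapsto U_i+\lambda(T-t)$, which produces a strict $\ge\lambda$ in the supersolution inequality and hence $\lambda\le o(1)$ after subtraction. Second, the phrase ``by continuity this strict inequality persists in a neighborhood'' is not quite right, since $u_j^*$ is only u.s.c.; what you actually use is that along the doubled maximizers $(t_\varepsilon,x_\varepsilon)\to(\hat t,\hat x)$ one has $u_{i^\star}^*(t_\varepsilon,x_\varepsilon)\to u_{i^\star}^*(\hat t,\hat x)$ while $\limsup_\varepsilon u_j^*(t_\varepsilon,x_\varepsilon)\le u_j^*(\hat t,\hat x)$, so the strict gap survives for small $\varepsilon$. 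Neither issue affects the overall strategy.
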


\vspace{2.5mm}
{Using Definition \ref{dvs} and Lemma \ref{cp}, we shall show that $(V_i(t,x))_{i\in\Pi}$ is the unique viscosity solution to \eqref{HJB12}.}
\begin{proposition}[Existence and uniqueness]\label{cvs}
  For each $i\in\Pi$, the value function $V_i\in C([0,T)\times[0,L))$ is the unique viscosity solution to {\eqref{HJB12} under the Definition \ref{dvs}}.
\end{proposition}
\begin{proof} Take $\mathcal{O}=(0,L)$, $\Gamma_1={0}$, $\Gamma_2={1}$, $\phi_i(x)=0$, $d_i(t,x)=g_i(t,L)$ and $\psi_i(x)=0$, assumptions (H1)-(H3) indicate that (Ha)-(Hb) hold. Proceeding with the same procedures as the proof in \cite[Theorem 5.2]{2016-KHARROUBI} (or see e.g., \cite[Theorem 5.3.2]{Pham-2009-book}) and \cite[Theorem 3.2]{Hamadene-2023-JMAA}, we can show that for each $i\in\{1,2,3\}$, the value function $V_i$ is the viscosity solution to \eqref{HJB12}
under the Definition \ref{dvs}.

Next, we prove that $(V_i)_{i\in\Pi}$ is the unique viscosity solution. We will show the result by contradiction. Assume on the contrary that $(U_i)_{i\in\Pi}$ and $(V_i)_{i\in\Pi}$ are both the viscosity solutions to \eqref{HJB12}.
Then by definitions of viscosity solution and lower/upper-semicontinuous envelopes, $U_i^*$ and $V_i^*$ (resp. $(U_i)_*$ and $(V_i)_*$) are the viscosity subsolutions (resp. viscosity supersolutions). Hence, with assumptions (H1)-(H3) in hand, Lemma \ref{cp} implies that $U_i^*\leq (V_i)_*$ and $V_i^*\leq (U_i)_*$ on $[0,T)\times[0,L)$, which together with the facts $(U_i)_*\leq U_i^*$ and $(V_i)_*\leq V_i^*$ (which hold by construction), gives that
$$U_i= (U_i)_* =U_i^*=(V_i)_*=V_i^*=V_i \ \ \text{in}\ \ [0,T)\times [0,L).$$
This means that $V_i\in C([0,T)\times [0,L))$ is the unique viscosity solution to \eqref{HJB12} under the Definition \ref{dvs}.
\end{proof}
The assumption (H3) indicates that no switching occurs if the initial regime state $i=2$. When the initial regime state $i\ne 2$, \cite{Hamadene-2009-SIAM} and \cite{Ludkovski-2005} give the following result regarding the optimal strategy. 
\begin{proposition}[Optimal switching strategy]\label{os}
Let assumptions (H1)- (H3) hold and the initial regime state $i\in\{1,3\}$.  Definie the sequence $\mathbb{F}$-stopping times $(\tau^*_n)_{n\geq 1}$ as follows:
\begin{equation*}
\begin{split}
  & \tau^*_1:=\inf\{s\geq t: V_i=\max_{j\ne i}(V_j-h_{ij})\}\wedge \tau\wedge T,\\
  & \tau^*_n
  \begin{cases}
  =\inf\{s\geq \tau^*_1: V_{\pi_{\tau^*_{n-1}}}=\max\limits_{k\ne \tau^*_{n-1}}(V_k-h_{\pi_{\tau^*_{n-1}}k})\}\wedge \tau\wedge T,& \text{if}\ \ \pi_{\tau^*_{n-1}}\ne 2;\\
  \nexists, &\text{if}\ \ \pi_{\tau^*_{n-1}}= 2,
  \end{cases} 
  \ \ \text{for~any}~n\geq 2,
\end{split}
\end{equation*}
where,
\begin{itemize}
\item $\pi_{\tau_1^*}=\sum\limits_{k\in\Pi} k\chi_\{\max\limits_{j\ne i}(V_j-h_{ij})(\tau_1^*,X_{\tau^*_1}^{t,x,i})=(V_k-h_{ik})(\tau_1^*,X_{\tau^*_1}^{t,x,i})\}$
\item for any $n\geq1$, $t\geq \tau_n^*$, $V_{\pi_{\tau_n^*}}(t,x)=\sum\limits_{k\in\Pi}V_k(t,x)\chi_{\{\pi_{\tau_n^*}=k\}}$;
\item for any $n\geq 2$ and $\pi_{\tau_{n-1}^*}\ne 2$, $\pi_{\tau_{n}^*}=l$ on the set
$$\Bigg\{\max\limits_{j\ne \pi_{\tau_{n-1}^*}}(V_j-h_{\pi_{\tau_{n-1}^*}j})(\tau_n^*,X_{\tau^*_n}^{t,x,i})=(V_l-h_{\pi_{\tau_{n-1}^*}l})(\tau_n^*,X_{\tau^*_n}^{t,x,i})\Bigg\}$$
with $h_{\pi_{\tau_{n-1}^*}j}(\tau_n^*,X_{\tau^*_n}^{t,x,i})=\sum\limits_{k\in\Pi}h_{kj}(\tau_n^*,X_{\tau^*_n}^{t,x,i})\chi_{\{\pi_{\tau_{n-1}^*}=k\}}.$
\end{itemize}
Then the strategy $\alpha^*=(\tau^*_n,\iota^*_n)_{n\geq 1}$ is optimal, i.e., $J_i(t,x,\alpha^*)\geq J_i(t,x,\alpha)$ for all $\alpha\in\mathcal{A}$.
\end{proposition}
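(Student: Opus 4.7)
My plan is a verification argument. Since by \eqref{eq: optimal_control} one has $V_i(t,x) \geq J_i(t,x,\alpha)$ for every $\alpha \in \mathcal{A}$, it suffices to establish the reverse inequality $V_i(t,x) \leq J_i(t,x,\alpha^*)$, i.e., that the specific strategy $\alpha^*$ attains the supremum. The two ingredients are the stochastic dynamic programming principle (Lemma \ref{sdpp}) and the geometric characterization of the switching/continuation regions \eqref{SI}--\eqref{SIJ} made available through Proposition \ref{cvs}.

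The core of the argument is an iteration of Lemma \ref{sdpp} along the sequence $(\tau_n^*)$. First invoke Lemma \ref{sdpp} with the stopping time $\theta = \tau_1^* \wedge \tau$. By construction of $\tau_1^*$, the trajectory $(s, X_s^{t,x,i})$ remains in $\mathcal{C}_i$ on $[t, \tau_1^*)$, so no intermediate switch occurs; at $s = \tau_1^*$, on the event $\{\tau_1^* < \tau \wedge T\}$ the state hits $\mathcal{S}_{i, \iota_1^*}$, and \eqref{SIJ} supplies the exact identity $V_i(\tau_1^*, X_{\tau_1^*}) = V_{\iota_1^*}(\tau_1^*, X_{\tau_1^*}) - h_{i, \iota_1^*}(\tau_1^*, X_{\tau_1^*})$. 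Substituting yields an expression for $V_i(t,x)$ as the expected reward accrued on $[t, \tau_1^* \wedge \tau]$ under $\alpha^*$, minus the switching cost paid at $\tau_1^*$, plus the post-switch value $V_{\iota_1^*}(\tau_1^*, X_{\tau_1^*})$. Apply Lemma \ref{sdpp} again to $V_{\iota_1^*}$ with initial data $(\tau_1^*, X_{\tau_1^*}, \iota_1^*)$ and $\theta = \tau_2^* \wedge \tau$, and continue. After $n$ iterations, $V_i(t,x)$ equals the first $n$ running-and-switching terms appearing in the definition \eqref{J} of $J_i(t,x,\alpha^*)$, plus a residual of the form $E^{t,x,i}\bigl[e^{-\beta_{\iota_n^*}(\tau_n^*, X_{\tau_n^*})(\tau_n^*-t)} V_{\iota_n^*}(\tau_n^*, X_{\tau_n^*}) \chi_{\{\tau_n^* < \tau \wedge T\}}\bigr]$.

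The final step is to send $n \to \infty$ and drive this residual to zero. On any path that ever enters regime $2$, the sequence terminates by \eqref{hc2}; on paths that alternate only between regimes $1$ and $3$, the strict positivity of $h_{kj}$ for $k \neq j$, together with \eqref{hc1} and the uniform boundedness of $V_i$ (a consequence of the Lipschitz hypotheses in (H1)--(H3) over the compact set $[0,T]\times[0,L]$), forces only finitely many switches almost surely, for otherwise the expected accumulated switching cost would exceed the bounded expected reward, contradicting optimality. Hence $\tau_n^* = \tau \wedge T$ for all $n$ large enough a.s., the residual expectation vanishes, and by dominated convergence the remaining expression converges exactly to $J_i(t,x,\alpha^*)$. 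The main obstacle is precisely this termination/tightness step; a rigorous control of the residual in the general multi-regime Markov diffusion setting has been carried out in \cite{Hamadene-2009-SIAM} and \cite{Ludkovski-2005}, whose abstract hypotheses are fulfilled under (H1)--(H3).
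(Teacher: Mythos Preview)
Your proposal is correct and in fact supplies more than the paper does: the paper offers no proof of Proposition~\ref{os} at all, but merely states the result and cites \cite{Hamadene-2009-SIAM} and \cite{Ludkovski-2005} for it. The verification scheme you outline---iterating the dynamic programming principle (Lemma~\ref{sdpp}) along the candidate stopping times $(\tau_n^*)$, using the identity $V_i = V_{\iota_n^*} - h_{i,\iota_n^*}$ on each $\mathcal{S}_{ij}$, and then controlling the residual via the strict positivity of switching costs and boundedness of the value functions---is exactly the standard argument carried out in those references, so your approach and the paper's (implicit) approach coincide.
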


\section{Numerical Methods}\label{sec:4}


This section devoted to the numerical simulation results. 
To go beyond the standard optimal control framework, we explore the effect of partial vs perfect information in terms of the terminal reward function, representing the payoff an individual received if it survives the trip and arrives at the destination at a certain time $t$.  {\color{black}For simplicity, we focus on the case where the terminal reward is independent of the regime state (detour vs direct flight on arrival at $x=L$).} This reward function typically depends on climatic and biotic conditions such as availability of resources and competition. We assume it is a given function of $t$ for simplicity. 

Let $g(t)$ be the perceived terminal reward as a function of arrival time $t$ at $x=L$, which the individual leverages to optimize its switching strategy. This perceived terminal reward represents the ``best guess" of the individual concerning the condition at the terminal site $x=L$, which is inhabited from the past experience of the collective experience. Next, let $G(t)$ be the actual terminal reward. Consider the following two cases 
\begin{itemize}
    \item {(Perfect information)}  \quad  $g(t) = G(t)$;
  \item  \text{(Partial information)} \quad $g(t) \neq G(t).$
\end{itemize}

For the case of perfect information, we will use the variational PDE to calculate the value function $V(t,x)$ and obtain the optimal control given by switching regions, and the migratory payoff given the optimal control for individuals started from $(0,0)$, which will be represented by $V(0,0)$. This is the subject of the next subsection.

\subsection{Perfect information}\label{pi}
In Sections \ref{sec:2} and \ref{sec:3}, we derived the deterministic PDEs of the value functions for the optimal control problem $\eqref{eq: optimal_control}$. For clarity, we recall that:
\begin{equation}
    \left\{
    \begin{aligned}
        &\min [-\partial_t V_i + \beta V_i - \mathcal{L}_iV_i - f_i, V_i - \max_{j \neq i}(V_j - h_{ij})] = 0 & (t,x) \in [0, T) \times (0,L), \\
        & {(V_i)_x}(t,0) = 0 & t \in [0,T], \\
        &V_i (t,L) = G(t) & t \in (0,T], \\
        &V_i (T,x) = 0 & x \in [0,L),
    \end{aligned}
    \right.
 \label{e.4.8}
 \end{equation}
 where $\mathcal{L}_i$ is given in \eqref{e.a.L}.
Then, we use a finite difference scheme to solve the coupled system \eqref{e.4.8}. We give a brief introduction to the process below.

\begin{itemize}
    \item [] {\bf Step 1}: The $(t,x)$ domain is discretized into a lattice with step sizes $\Delta t$ and $\Delta x$. Denote $V_{i,m,n} = V_i(m \Delta t, n \Delta x)$, then the parabolic part of the equation can be discretized according to the implicit Euler scheme:

    \begin{equation}
        \begin{aligned}
            -&\frac{V_{i,m+1,n}-V_{i,m,n}}{\Delta t} - v_i\frac{V_{i,m,n+1}-V_{i,m,n}}{\Delta x} \\
            &- \mu_i \frac{V_{i,m,n+1}- 2 V_{i,m,n}+V_{i,m,n-1}}{{\Delta x}^2} + \beta_i V_{i,m,n} - f_{i,m,n} = 0.
        \end{aligned}
        \label{eq: discrete_form}
    \end{equation}

    \item [] {\bf Step 2}: Let the solution $(V_{i,m+1,n})_{1\leq i \leq 3, 1 \leq n \leq N_x}$ be given for the $(m+1)$-th time step, we solve \eqref{eq: discrete_form} for the solution at the $m$-th time step and denote the result by $(\hat{V}_{i,m,n})_{1\leq i \leq 3, 1 \leq n \leq N_x}$.

\item[] {\bf Step 3}: We update again to account for the variational inequality arising from the switching control by the equation below,
    \begin{equation}
        V_{i,m,n} = \max_{j \neq i} (\hat{V}_{i,m,n}, \hat{V}_{j,m,n}-h_{ij}).
        \label{eq: variational inequalities}
    \end{equation}

\end{itemize}

By taking the procedures above, we can obtain the optimal payoff value $V(t,x)$ for individuals starting at $(t,x)$, which estimates the largest expected value individual can obtain by executing the optimal control.
\label{perfect information}

\subsection{Switching region}\label{sr4}

In this subsection, we show how to represent the optimal strategy through the computation of value functions above. To achieve this goal, we recall the definitions of switching region and continuation region as below:
\begin{equation*}
\text{(Switching region)}\qquad     \mathcal{S}_{i} = \{(t,x)\in [0,T]\times [0,L]: V_i(t,x) =\max_{j\ne i}(V_j - h_{ij})(t,x)\}.
\end{equation*}
The complement set of $\mathcal{S}_{i}$ denoted by
\begin{equation}
\text{(Continuation region)}\qquad       \mathcal{C}_i = \{(t,x)\in [0,T]\times [0,L] : V_i(t,x) > \max_{j \neq i}(V_j - h_{ij})(t,x)\}.
    \label{e.4.11}
\end{equation}
We also denote the closed set representing the switching region from regime $i$ to regime $j$ by  $\mathcal{S}_{ij}$:
\begin{equation}
    \mathcal{S}_{ij} = \{(t,x)\in [0,T]\times [0,L]: V_i(t,x) = (V_j - h_{ij})(t,x) \quad \text{for some} \quad j \neq i\}.
    \label{e.4.12}
\end{equation}
With these notations, Proposition \ref{os} shows that the diffusion process for an individual adopting the optimal switching strategy can be fully characterized by a controlled diffusion process where an individual switch between different diffusion processes according to its physical location $(t,x)$ and its current state $i$, as given by \eqref{e.4.11} and \eqref{e.4.12}. To numerically demonstrate this, we follow the following steps: 
\begin{itemize}
    \item First, applying the numerical methods described in sub-section \ref{pi}, we obtain the value functions $V_i(t,x)$ for any fixed initial condition $(t,x,i)$. 
    \item Then, we compare $V_i(t,x)$ for different regimes $i$ by taking switching cost $h_{ij}$ (cost of switching from regime i to regime j) into consideration to draw the switching regions and continuing regions.
\end{itemize}

\begin{figure}[h!]
\centering
\includegraphics{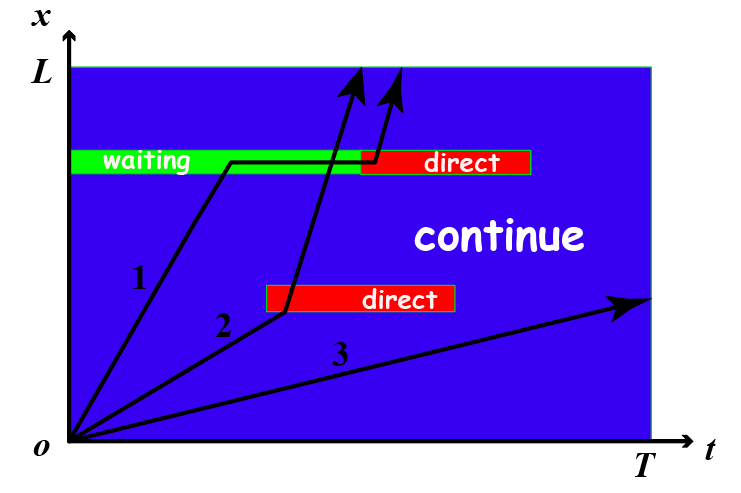}
\caption{(An illustration of 
switching regions)  Consider three different realizations of the diffusion process (representing three distinct individuals). In path 1, the individual arrives at the green region, so it is optimal for the individual to switch to the waiting state $i=3$ and stay there until it is time to switch to ``direct flight" (which is the diffusion region $i=2$). In the second path, the individual arrives at the red region at which point it is optimal to adopt direct flight immediately. (Note that once the individual adopts direct flight, no further switching is possible so it does not matter that it enters the green region. The above diagram only concerns individuals adopting switching modes $i=1$ or $i=3$.) Finally, in path 3, the individual does not enter the green or red region, so it is optimal for the individual to stay at diffusion mode of ``detour flight" $i=1$.}
\label{fig:switching_region_2}
\end{figure}


\vspace{2mm}
\subsection{Implement Optimal Control}

In this section, we discuss how to implement the optimal control $\alpha^*$ determined by the switching regions derived in the subsection \ref{sr4} with stochastic simulations.
Consider the stochastic processes that model the movement of individuals over time, governed by \ \eqref{X12}. To analyze this process, we discretize $(t,x)$ domain into a lattice with step sizes $\Delta t \ \text{and } \Delta x$, with absorbing boundaries (e.g. the process will be terminated once individual touch the boundary point $x=L$). Then individual movements will be modeled as a random walk with drift on the discretized domain with the transition probabilities $p_\ell, p_r$  (and the probability $1=p_\ell-p_r$ to stay put). We first fix spatial step size and then obtain the time step size according to spatial step to maintain numerical stability. After that, we derive for each regime $i$ the transition probabilities $p_\ell$, $p_r$ for each regime. For simplicity, we present the formulas for $i=1$, as the formulas for the other diffusion regime is similar.
\begin{equation*}
u_t = \mu_1 u_{xx} + v_1 u_{x},
\end{equation*}
let 
\begin{equation*}
\begin{split}
&\Delta x = h,\ \ \Delta t = \frac{\delta}{\mu_1+\mu_2} h^2, \\ &\frac{p_l+p_r}{2} = \frac{\mu_1}{\mu_1+\mu_2},\\
&p_l-p_r = \frac{v_1 \Delta t}{\Delta x} = v_1 \times\Delta x \times \frac{\Delta t}{(\Delta x)^2} = \frac{v_1 \Delta x}{\mu_1+\mu_2}.
\end{split}
\end{equation*}
Then one gets  

\begin{equation*}
\begin{split}
&u_t = \frac{p_l+p_r}{2} \times \frac{(\Delta x)^2}{\Delta t}\times u_{xx}+(p_l-p_r)\times\frac{\Delta x}{\Delta t}\times u_x,\\
&p_l = \delta \bigg[\frac{\mu_1}{\mu_1+\mu_2} - \frac{v_1 h}{2(\mu_1+\mu_2)}\bigg],\ \ p_r = \delta \bigg[\frac{\mu_1}{\mu_1+\mu_2} + \frac{v_1 h}{2(\mu_1+\mu_2)}\bigg].
\end{split}
\end{equation*}}

Given the optimal control, individuals governed by the stochastic diffusion process \eqref{X12} will start from $(t,x,i)$ and follow the random walk with transitional probabilities obtained above until one of the following cases happen:
\begin{itemize}
    \item \textbf{Case 1:} $t=T$ or the individual hits the terminal site $x=L$.
    \item \textbf{Case 2:} The first time $t=\tau_1$ when the individual enters $(t,x) \in \{S_{ij}\}_j$ for some $j\neq i$.  
\end{itemize}

In the first case, the diffusion process will terminate. In the second case, the individual will continue the diffusion process for $t \geq \tau_1$ with new regime state $j$ (and hence new values of $(p_\ell,p_r)$ for the approximate random walk), until one of the above scenarios happen again. 
Figure \ref{fig:switching_region_2} is an illustrative example of stochastic processes described above. As the individual hits the switching region, switching time $\tau_n$ and state after switching $\iota_n$ are recorded. When case 1 happens, a series of switching time and states $(\tau_n, \iota_n)_{n \geqslant 0}$ can be obtained.

By repeating the above simulation multiple times, 
the quantitative statistics of bird migration can be obtained, which will be further analyzed in the following numerical investigation into specific biological problems.
\label{sec:stochastic simulation}

{Worth to mention that by our way of discretization, the spatitial step $\Delta x$ will be bounded by $\frac{2\mu}{v}$. For numerical stability, and the time step $\Delta t$ = $\frac{\delta}{\mu_1+\mu_2} (\Delta x)^2$, where $\delta$ is a fixed small number, can be extremely small, which result in a large number of computations will be required for a single simulation. To resolve the problem,  we combine $n$ time steps as one large step by first compute the probability distributions on the state space of steps moving forward $[-n, -n+1, \dots, 0, 1, \dots, n-1, n]$ steps and only check for update of regime for each individual every $n$ steps, with $n$ being a reasonable small integer so that the simulation still follows the optimal control strictly. With this new algorithms implemented, we were able to reduce the amount of computations needed by $n$ times.}

\subsection{Partial information}

In reality, organisms make decisions based on perceived information. It is not possible in general, even for highly mobile organisms such as birds, to obtain perfect information of the environment, which depends on factors such as climate change, weather dynamics and so on. To model the partial information perceived by birds, we use a weighted average of historical payoffs, denoted by $g$ to reflect that information was based on the past migratory experiences of birds. Thus, we assume the bird possesses partial information $g$ as the average value of the perfect information $G$. An extreme case is $g = \fint G(t)$.

We propose a numerical scheme to model the optimal migration behavior under partial information as follows.

\begin{itemize}
    \item [] {\bf Step 1}: Compute $V^{partial}_i(t,x)$ as optimal control value with perceived terminal reward $g$,
    \begin{equation}
        \begin{aligned}
    		V^{partial}_i(t,x) = \sup_{\alpha \in \mathcal{A}} E^{t,x,i} \bigg[ \int_{t}^{\tau \wedge T}
    		& e^{-\beta_{i}(s, x) (s-t)} f_{i}(s,X_{s}^{t,x,i}) \, ds -\sum_{\tau_n \leqslant \tau} e^{-\beta_{i}(\tau_{n}, x) (\tau_{n}-t)} h_{l_{n-1},l_{n}} \\
    		& + \chi_{\left\{ \tau < T \right\}} e^{-\beta_{i}(\tau, x) (\tau-t)} g(\tau) \bigg],
        \end{aligned}
        \label{eq: partial_information}
    \end{equation}
    to obtain resulting optimal control $(\hat{\tau}_n, \hat{\iota}_n)$ is expressed in terms of the switching region described in section 4.2. Note that the control is suboptimal with respect to the actual terminal payoff $G$.

    \item [] {\bf Step 2}: Using the stochastic simulations introduced in subsection \ref{sec:stochastic simulation}, where the actions of individual are governed by the optimal strategy obtained in Step 1 based on partial information $g$, we can record a family of arriving time {\color{black}$(\tau^n)_{1 \leq n \leq N}$} at $x=L$. Then we use the Monte-Carlo method to compute the difference in expectations to corresponding to the perceived terminal payoff versus the actual terminal payoff function $G(t)$.
    \begin{equation}
        E^{t,x,i}(g) \approx \frac{1}{N}\sum_{n=1}^{N} g{\color{black}(\tau^n)}, \quad E^{t,x,i}(G) \approx \frac{1}{N}\sum_{n=1}^{N} G{\color{black}(\tau^n)},
    \end{equation}
    which implies
    \begin{equation}
        E^{t,x,i}(g-G) = E^{t,x,i}(g) - E^{t,x,i}(G).
    \end{equation}
    \item [] {\bf Step 3}: Update the expected payoff value $V^{actual}_i$, accounting for the difference between $g$ (the bird's perception of terminal reward) and $G$ (the actual terminal reward). To obtain the actual payoff
    \begin{equation}
        \begin{aligned}
    		V^{actual}_i(t,x) &= E^{t,x,i} \bigg[ \int_{t}^{\tau \wedge T}
    		 e^{-\beta_{i}(s, x) (s-t)} f_{i}(s,X_{s}^{t,x,i}) \, ds -\sum_{\tau_n \leqslant \tau} e^{-\beta_{i}(\tau_{n}, x) (\tau_{n}-t)} h_{l_{n-1},l_{n}} \\
    		& \qquad \qquad + \chi_{\left\{ \tau < T \right\}} e^{-\beta_{i}(\tau, x) (\tau-t)} G(\tau) \bigg]\\
           &= V_i^{partial}(t,x) - E^{t,x,i}(g-G) 
        \end{aligned}
        \label{eq: perfect_information}
    \end{equation}
    under the control $\alpha$ in the form of switching regions obtained from $V^{partial}_i(t,x)$. 
\end{itemize}


From these process above, we can also observe how the switching behavior of the bird differ when the environment is different from their perceptions. That is, the difference of switching regions and optimal controls can be observed under the guidance of different environmental settings with various running rewards, terminal rewards, predation risks, etc.

\section{Numerical Results}
\label{sec:5}
 In this section, we address the questions posed in the introduction through numerical simulations. In
this section, we use colormaps to illustrate the value of the value function $V_1(t,x)$ as well as the optimal control. Precisely, for Figures \ref{fig:V(0,0) against lamda}, \ref{fig:Optimal Control2}, \ref{fig:Optimal Control3}, \ref{fig:regime1(region)_imperfect(case1)} and \ref{fig:regime1(simulation)_imperfect(case1)}, 
the red (resp. green) region represents the switching regions $\mathcal{S}_{12}$ (resp. $\mathcal{S}_{13}$), corresponding to the switching from detour to direct flight (resp. detour to waiting at stopover site). The remaining region is $\mathcal{C}_{1}$ (where the bird remains in the detour/slow-flight state). In the latter region, we use color map to indicate the dependence of the value function $V_i$ on the time and space variables.

\subsection{Deterioration of stopover site}
\label{sec:Deterioration}
Global warming frequently lead to the deterioration of stopover sites, rendering one or more of the stopover sites unsuitable for migrating birds. 
This subsection aims to study the impact of the deterioration of stopover sites in our model on the migration dynamics of shorebirds. 

In accordance with the geographical set up laid out in Figures \ref{fig:1b} and \ref{fig:2} in the introduction, we denote $X_k$ as the first instance of $x$ value such that the $k$-th stopover site is available. Then define $M_k = \{x \in [0,L]: x \in [X_k, X_k+\epsilon_k]\}$, as the locations of stopover sites in which the individual who are not in direct flight (recall that individual in direct flight has to continue in its journey until reaching $x=L$) can choose to switch to any other states. We denote the set $\mathbb{M} = \{ M_k \mid k = 1, \ldots, n \}.$ Specifically, we define 
$$
\epsilon_k = \frac{L}{50},~~X_1 = \frac{L}{3} - \frac{\epsilon_k}{2},~~X_2 = \frac{4L}{9} - \frac{\epsilon_k}{2},~~X_3 = \frac{3L}{5} - \frac{\epsilon_k}{2},~~ \text{ and }~ ~X_3 = \frac{5L}{6} - \frac{\epsilon_k}{2}.$$ In addition, we define $M_0 = [0, X_1)$ to be the positions of wintering site where individual can choose to switch between regime 1 and 3 (detour and waiting), but they can't switch to regime 2 (direct flight) since it haven't reached the sea. We fix the values of other parameters as in Table \ref{parameter table1}.

\begin{table}[h]
\centering
\begin{threeparttable}
\caption{Parameters}
\begin{tabular}{|p{3cm}|c|p{8cm}|}
\hline
\textbf{Parameter} & \textbf{Baseline Value} & \textbf{Description} \\ \hline
$L$ & 5301 km\tnote{a} & Direct distance from Izembek to west coast of Mexico \\ \hline

$v_1$ & 373.33 km/day\tnote{b} & Detour flight velocity \\ \hline
$v_2$ & 560 km/day\tnote{c} & Direct flight velocity \\ \hline
$v_3$ & 0 & Velocity of waiting \\ \hline
$T$ & 72 days\tnote{d} & Total number of days covered \\ \hline
$\mu_1$ & 150\,km$^2$/(day) & Volatility associated with detour migration \\ \hline
$\mu_2$ & 250\,km$^2$/(day) & Volatility associated with direct migration \\ \hline
$\mu_3$ & 0 & Volatility associated with direct migration \\ \hline
$\beta_1$ & 0.00116/day\tnote{e} & Mortality risk for detour migration \\ \hline
$\beta_2$ & 0.002/day\tnote{e} & Mortality risk for direct migration \\ \hline
$\beta_3$ & 0.0002/day\tnote{e} & Mortality risk at stop site 1 day \\
\hline
$n$ & 3 & Number of stopover sites available \\ \hline

{$f_1$} & {0} & {Reward for regime 1: detour flight}\\\hline
{$f_2$} & {0} & {Reward for regime 2: direct flight}\\\hline
{$f_3$} & {$\displaystyle \begin{cases} 
0.00156 \tnote{d} & \text{wintering site} \\
0.000907 \tnote{d}& \text{1st stopover site} \\
0.00551 \tnote{d}& \text{2nd stopover site} \\
0.00288 \tnote{d}& \text{3rd stopover site} \\
0 & \text{otherwise}
\end{cases}$} & {Reward for regime 3: waiting}\\\hline
$h_{ij}$ $i,j \in (0,1,2,3) $ & 0.06 & Switch Cost\\\hline
\end{tabular}
\label{parameter table1}
\begin{tablenotes}
\item[a] According to \cite{dau1992fall}, most Brent fly {approximately} 5301 km during winter migration. Here we consider spring migration {and assume direct distance remains the same.}
\item[b]  
{We assume the detour velocity is lower than the direct velocity to account for the longer migration distance.}
\item[c] 
{According to \cite{dau1992fall}, Brent migrate at a speed of about 80 kph, we assume on average birds migrate 7 hours per day, which translates to 560 km per day. }
\item[d] Spring migration of black brants starts around mid-Jan {and ends} around early-April\cite{Lewis2020Brant}. So spring migration {lasts approximately} 72 days. 

\item[e]
\cite{ward1997seasonal} estimates that the average annual survival rate for black brants is 0.84, so we approximate spring migration total mortality rate around 0.08, then averaged mortality rate is around 0.00116 per day.
\item[f] According to \cite{moore2004staging}, the food abundance level 
is reported as follows: Baja California, Mexico (1,800 ha), Humboldt Bay (1,045 ha), Willapa Bay and Grays Harbor (6,650 ha), Boundary Bay  (3,320 ha), and Izembek Lagoon, Alaska (16,000 ha).  The reward is calculated by use the ratio of stopover site food level and Izembek food level divide by total migration duration.

\end{tablenotes}
\end{threeparttable}
\end{table}

Since availability of resources or breeding opportunity is sensitive to time, individuals that arrive too early (e.g., before eelgrass is exposed from winter ice) or too late (e.g., after eelgrass has been depleted) would compromise their ability to accumulate endogenous reserves \cite{smith2012trends}. Thus, we use a Gaussian function \begin{equation}
G(t) = \exp\left(-\frac{\left(t - \frac{T}{2}\right)^2}{2\left(\frac{T}{4}\right)^2}\right)
\label{perfect terminal}
\end{equation} to represent the terminal reward as a function of the arrival time at $x=L$ in \eqref{e.4.8} which is maximized at the time $t=T/2$. We set the number of stopover sites to be 3 to represent the opportunity to switch in the early, middle, or late stages of the migration process; see Figure \ref{fig:stopover Sites}.
\begin{figure}[htbp!]
\centering
\includegraphics[width=0.5\textwidth]{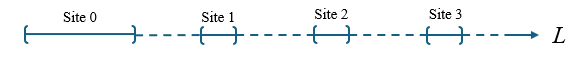}
\caption{stopover sites}
\label{fig:stopover Sites}
\end{figure}
\medskip
Then we impose different rewards for individuals to engage in staging at these three sites with the second staging site presenting the greatest reward, as shown in the Table \ref{parameter table1}. The running reward functions are denoted by $f_{i,3}(x,t)$ for $i=1,2,3$, where the `$3$' in the subscript means that the individual is in regime `$3$' (i.e. the 'waiting' regime).
In the next section, we model the deterioration of the quality of site 2.

\subsection*{\normalsize Modeling the effect of deteriorated stopover sites}

The earlier snowmelt and onset of growing season in Arctic due to global warming \cite{hupp2018spring,stone2002earlier,bhatt2010circumpolar} can result in earlier availability of food for black brants.    Here, we only account for the impact of global warming to stopover site, and use $\lambda \in [0,1]$ to denote the level of deteriation. Precisely, the running reward function at the second stopover site is given as follows.

\begin{equation}\label{e.1218.1}
    f^{\lambda}_{2;3}(x,t) = (1-\lambda) f_{2;3}(x,t), \\ \text{for} \  \lambda \in [0,1].
\end{equation}
We choose to vary the quality of the second stopover site, as it is adopted by the optimal stopover sites for all individuals given by the optimal control as explained below. If there is no deterioration in stopover sites, we obtain the optimal controls as shown in Figure \ref{fig:optimal control1} by applying the methods introduced in subsection \ref{perfect information}. 
Note that in Figure \ref{fig:optimal control1}, although it is optimal to switch to staging at some $(t,x) \in M_3$, individuals started at $(0,0)$ will always arrive at $(t,x) \in M_2$ first, at which the individual switches to direct flight and thus skipping over stopover site three. Hence, the third stopover site was never utilized if we only consider individuals started at $(0,0)$, even though the green region in Figure \ref{fig:optimal control1} indicates that it is optimal to stop there conditioned on an individual reaching it in detour flight state.



Next, we study the effect of stopover sites' deterioration on (i) migratory payoff and optimal control; (ii) optimal stopover region. 

\subsubsection{Effect on migratory payoff and optimal control}  
In Figure \ref{fig:V(0,0) against lamda}, we plot the change in migratory payoff $V(0,0)$ (conditioned on individual starting at $(t,x)=(0,0)$) as the running benefit at the staging site deteriorated (i.e., $\lambda$ increases from $0$ to $1$ in \eqref{e.1218.1}).
\begin{figure}[htbp!]
    \centering
    \begin{minipage}{0.4\textwidth} 
        \includegraphics[width=\linewidth]{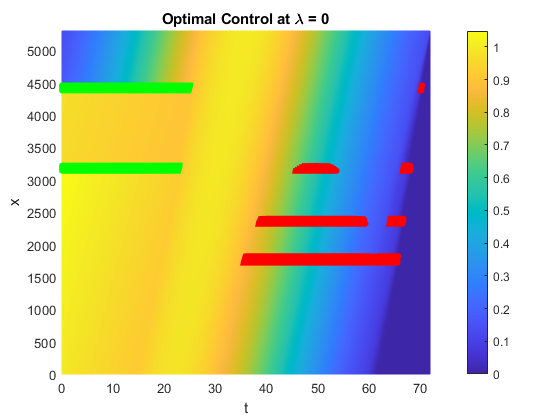}
       \caption{Optimal control at $\lambda = 0$}
       \label{fig:optimal control1}
    \end{minipage}
    \hfill 
    \begin{minipage}{0.4\textwidth} 
        \includegraphics[width=\linewidth]{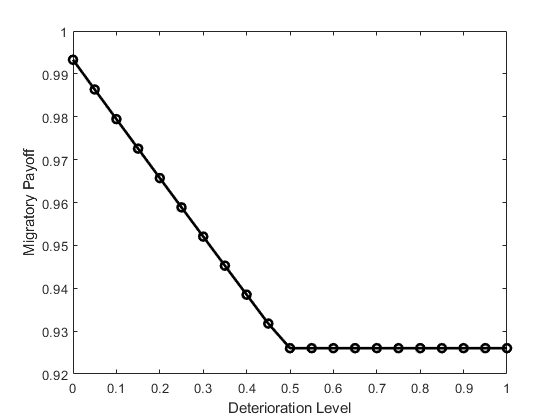}
        \caption{Migratory payoff against $\lambda$}
        \label{fig:V(0,0) against lamda}
    \end{minipage}
\end{figure}
It is observed in Figure \ref{fig:V(0,0) against lamda} that $V(0,0)$ decreases as $\lambda$ increases in $[0,0.5]$. This implies a lower migratory payoff as deterioration worsens at a stopover site. We also observed that the slope approaches $0$ after $\lambda = 0.5$. Later on, we will see that this is due to the complete abandonment of the stopover site for $\lambda$ large.

 To explore the effects on optimal control, we observe the optimal controls at $\lambda = 0.5$ (see Figure  \ref{fig:Optimal Control2}) and $\lambda = 0.55$ (see Figure \ref{fig:Optimal Control3}).
\begin{figure}[h!]
    \centering
    \begin{minipage}{0.4\textwidth} 
        \includegraphics[width=\linewidth]{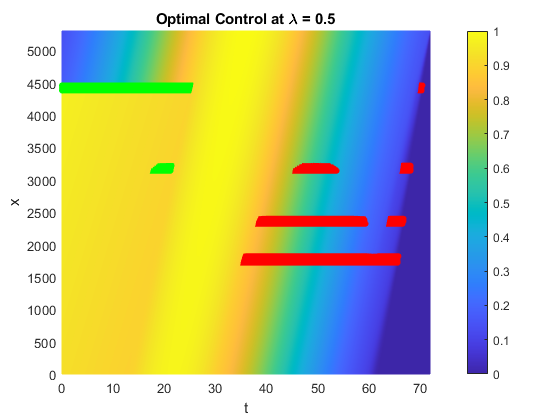}
       \caption{Optimal control at $\lambda = 0.6$}
       \label{fig:Optimal Control2}
    \end{minipage}
    \hfill 
    \begin{minipage}{0.4\textwidth} 
        \includegraphics[width=\linewidth]{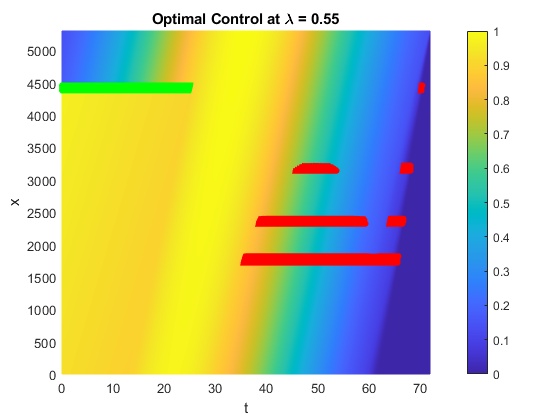}
        \caption{Optimal control at $\lambda = 0.65$}
        \label{fig:Optimal Control3}
    \end{minipage}
\end{figure}
Compared to  Figure \ref{fig:optimal control1} ($\lambda=0$, i.e, without deterioration), the switch region at stopover site 2 contracted significantly indicated by Figure \ref{fig:Optimal Control2}. whereas the the region vanished completely in Figure \ref{fig:Optimal Control3}, indicating that it is optimal for the migratory population to abandon stopover site 2. Since the stopover site is no longer utilized, further deterioration no longer impact the migratory payoff, which explains the change of slope in Figure \ref{fig:V(0,0) against lamda}. Note that there is a slight occurrence of switch region at switch site 0, which in our model is the point where bird reach the sea from wintering site. Thus, the occurrence of switch region at site 0 can be understand as a delay of departure.

\vspace{2.5mm}
\subsubsection{Effect on optimal switching regions}
To better investigate the behavior of individuals under the change of optimal control,  we utilize stochastic simulation method introduced in sub-section \ref{sec:stochastic simulation}. Here, we mainly observe the change in behavior at stopover site as the optimal control varies, which can be studied explicitly by observing the change of duration of staying at each stopover site $\overline{L_k}$. By a stochastic simulation of migrations with total $m$ individuals with optimal control given by \eqref{e.4.11}
and \eqref{e.4.12}, $\overline{L_k}$ can be obtained by 
\begin{equation}
\overline{L_k} = \frac{\sum_{j=1}^{m}\text{D}_{jk}}{\sum_{j=1}^{m} \mathbb{I}(D_{jk} > 0)}, \ \  
\end{equation}
where $D_{jk}$ denotes the {duration of staying} of the $j$-th individual at the $k$-th stopover site. Then, $\overline{L}^d_j$, the average length of stay at a particular stopover site after incorporating deterioration, can be obtained by repeating the previous simulation procedures. Taking a different value of $\lambda$, the direct impact of a different level of deterioration on the length of stay at a stopover site can be studied as shown in Figure \ref{fig:LOS against lamda}, in which the length of stay at each stopover site was displayed.
\begin{figure}[h!]
\centering
\includegraphics[width=0.5\textwidth]{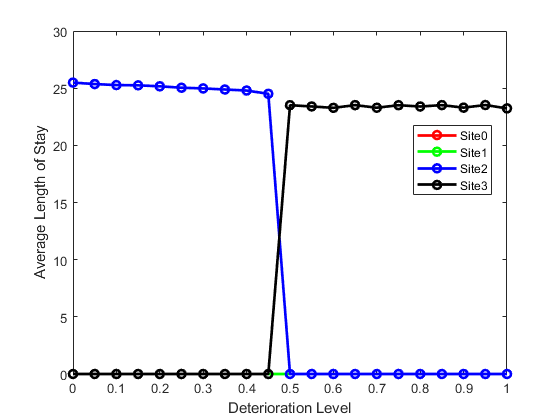}
\caption{Average length of stay at stopover site aginst $\lambda$}
\label{fig:LOS against lamda}
\end{figure}

Based on our numerical results in Figure \ref{fig:LOS against lamda}, we have three main observations as follows. \begin{itemize}
    \item For the stopover site $k=2$, beyond the critical deterioration level $\lambda=0.5$, the average duration of staying drops levels off at $0$, which explains why $V(0,0)$ is no longer senstive to further increase in deterioration level $\lambda$ for $\lambda > 0.5$ (see Figure \ref{fig:V(0,0) against lamda}). In \cite{lehikoinen2017counteracting}, it is reported that migratory birds were forced to choose sub-optimal stopover sites due to the degradation of optimal stopover site, which result in less successful migrations.
    
    \item When $\lambda \leq 0.5$, there is no clear correlation between the length of stay at stopover site 2 and deterioration level $\lambda$, differing from the negative correlation observed in \cite{smith2012trends}, where migratory individuals were observed to spend less time at the stopover site gradually. 
    
    \item As the deteriorated level $\lambda$ increases, the duration of staying at stopover site 3 increases, which indicates the importance of it improved as site 2 was modeled to be deteriorated. stopover site 0 and 1 was never be utilized, which is due to the running rewards at stopover site 0 and 1 are much smaller compared to other stopover sites and the distance between them and other sites are small relative to migratory velocities, so there is no advantage staying at stopover site 0 and 1.
\end{itemize}

\vspace{2.5mm}

Next, we introduced the modified terminal reward 
\begin{equation}
G^\lambda(t, \gamma) = \exp\left(-\frac{\left(t - \left(\frac{T}{2} + \gamma \cdot T \cdot \lambda\right)\right)^2}{2 \left(\frac{T}{4}\right)^2}\right)
\end{equation}
which shift the peak timing of terminal reward G in \ref{e.4.8} to left as $\lambda$ increase, and  $\gamma$ is the shift magnitude such that $G^\lambda(t)$ take its maximum value at $\frac{T}{2} -\gamma T(1-\lambda)$, where $\gamma \in [0, \frac{1}{2}]$. In the following analysis, $\gamma = \frac{1}{10}$ was chosen so that the peak of $G(t)$ was shift to the left by $\frac{T}{10}$, which indicates onset of growing season is 7.2 days earlier in the model. This choice is motivated by \cite{zheng2022earlier}, which estimated that the earliest onset was advancing at 7.64 days/decade. Here $\lambda$ was used again since  deterioration level and earlier onset of growing season in Arctic can both be attributed to global warming. Following the same procedures above, we obtain the optimal control and duration of staying at each stopover site.
\begin{figure}[!htbp]
  \centering
    \begin{minipage}{0.4\textwidth} 
        \includegraphics[width=\linewidth]{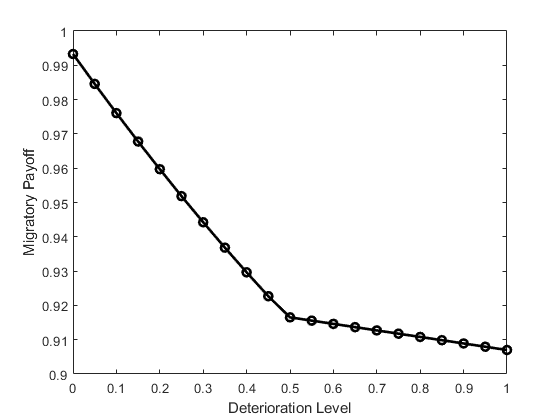}
       \caption{Migratory payoff after adjusting terminal reward}
       \label{fig:New Payoff}
    \end{minipage}
    \begin{minipage}{0.4\textwidth} 
        \includegraphics[width=\linewidth]{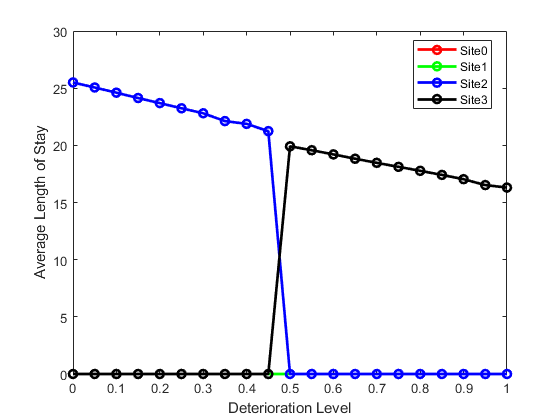}
        \caption{Average duration of stay at stopover site against $\lambda$ after effect on terminal}
        \label{fig:LOS against lamda 2}
    \end{minipage}
\end{figure}

Compared with Figure \ref{fig:V(0,0) against lamda} and \ref{fig:LOS against lamda}, one observation is that the migratory payoff continue to decrease even after stopover site 2 was abandoned and birds choose to wait at stopover site 3. This is due to that as the onset of growing season in Arctic become earlier and peak terminal reward shift left, there is less time available for birds to rest at any stopover sites, which results in less reward from resting at any stopover site and thus decrease migratory payoff even when the deteriorated stopover site was no longer utilized. Another observation is that the duration of staying at stopover sites 2 and 3 both decreased as the deterioration level increased before the critical deterioration level. This is consistent with the observation by \cite{smith2012trends} and \cite{hupp2018spring} that birds tended to stay for a shorter period of time at stopover sites due to effect of global warming.



\subsection{Perfect vs Imperfect information}
In previous sections, we have observed how individuals maximize their payoff by choosing a migration strategy, which is based on the assumption that the individual agent has perfect information of the environment and can make prediction about their journey. However, as discussed in the introduction, individual birds typically do not have access to perfect environmental information. 
While they can rely on past experience to estimate the conditions in their migratory route based on the perception, the actual environmental condition is likely subject to additional factors such as stochasticity/noise, or other changes due to human activities and global climate change.
In any case, individual birds must make decisions based on imperfect information, leading to deviations between the expected and actual rewards.

In some circumstances, the individual bird can gain access to more accurate environmental condition at the destination by utilizing a stopover site that is closer to the destination, in contrast to making a long distance direct flight. For example, the Icelandic whimbrel (\textit{Numenius phaeopus islandicus}) migrates from Iceland to West Africa during spring migration, utilizing West Europe as a stopover site \cite{gunnarsson2016migration}. Carneiro et al. \cite{carneiro2020linking} identified two primary spring migration strategies employed by this species: a direct flight from wintering to breeding sites, versus the incorporation of a stopover site. The latter strategy is enables the birds to evaluate terminal site weather conditions at the stopover, thereby reducing weather-related risks, and is generally preferred. 

Here, we attempt to address the following biological question:
\begin{itemize}
    \item How does weather dynamics at the terminal site impact bird migration when more accurate weather information can be gained when utilizing a stopover site? 
\end{itemize}



For clarity, denote by $g(t)$ the perceived terminal reward function (based on imperfect information) and denote by $G(t)$ the actual terminal reward function (based on perfect information). To address the aforementioned question, we investigate the effect of information on both the expected payoff and migration strategy, which are analyzed in two aspects corresponding to the following subsections:

In Subsection \ref{sec:5.2.1}, we explore the effect of noise in the terminal reward function. Precisely, we assume that the actual reward function is a perturbation of the perceived reward:
\begin{equation}      
G(t,x) = g(t,x) + A  \kappa(t),
\label{eq: fluctuation}
\end{equation}
where $\kappa(t)$ is an oscillatory function with high frequency and small amplitude. The parameter $A$ governs the intensity variation of the fluctuation with values ranging from 0 to 1.

In Subsection \ref{sec:5.2.2}, we are motivated by  \cite{carneiro2020linking} to explore the situation where individuals have access to the perfect information $G(t)$ by using stopover site(s). Here, we mainly discuss about two scenarios as follows:
\begin{itemize}

    \item [] {\bf Scenario1}: The perceived terminal reward function $g(t)$ is a coarse-grained version of the actual reward funtion $G(t)$, i.e. represented as a projection of $G(t)$ into the space of step functions:
    \begin{equation}
        g_n(t) = \sum^{n}_{k=0} \chi_{\left\{ t_{k} \leq t \leq t_{k+1} \right\}} \fint_{t_k}^{t_{k+1}} G\,dt, \quad \text{where } 0 = t_0 \leq  \dots \leq t_k = \frac{kT}{n} \leq \dots \leq t_n = T.
    \label{eq: mean_value}
    \end{equation}

    \item [] {\bf Scenario2}: Climate change can cause the actual weather to deviate significantly from the individual's experience or perception of the past. For instance, global warming may shift the timing and intensity of the green-up peak at the terminal site \cite{robertson2024decoupling}.
    The relationship of $g(t)$ and $G(t)$ can be written as follows. 
    \begin{equation}
        G(t) = g(t+t_{move}),
        \label{eq: moved_peak}
    \end{equation} where $t_{move}$ characterizes how much the ``green up" time arrives earlier.
\end{itemize}

We apply the numerical methods introduced in Section 4 as follows:
\begin{itemize}
    \item First, we examine the difference between expected and real optimal value at $(0,0)$ guided by optimal control under partial information. We will also see the change of the difference with strength $A$ increasing. See details in Subsection \ref{sec:5.2.1};
    \item Second, we observe how the optimal control (i.e. the collection of switching regions) changes if the agent can access (and optimize with) the perfect information $G(t)$ after spending time at the stopover site. We will discuss two specific cases as mentioned above. See details in Subsection \ref{sec:5.2.2}.
\end{itemize}


\vspace{2mm}

The parameters used in the following simulations are summarized in Table \ref{parameter table2}.

\begin{table}[!htbp!]
\centering
\begin{threeparttable}
\caption{Parameters}
\begin{tabular}{|p{3.5cm}|c|p{7cm}|}
\hline
\textbf{Parameter} & \textbf{Baseline Value} & \textbf{Description}\\ \hline
$L$ & 6450 km\tnote{a} & Distance from Iceland to Guinea in West Africa\\ \hline

$v_1$ & {277.56 km/day\tnote{b}} & Detour flight velocity\\ \hline
$v_2$ & {293.328 km/day\tnote{b}} & Direct flight velocity\\ \hline
$v_3$ & 0 & Velocity of waiting\\ \hline
$T$ & 70 days\tnote{a} & Total number of days covered\\ 
\hline
$\mu_1$ & {145 km$^2$/(day)} & Volatility associated with detour migration\\ 
\hline
$\mu_2$ & {150 km$^2$/(day)} & Volatility associated with direct migration\\ 
\hline
$\mu_3$ & 0 & Volatility associated with direct migration\\ \hline
$\beta_1$ & 0.0005 {/day} & Mortality risk for detour migration\\ \hline
$\beta_2$ & 0.00055 {/day} & Mortality risk for direct migration \\ \hline
$\beta_3$ & {0.0004 /day} & {Mortality risk for waiting at the stopover site} \\
\hline
$n$ & 1\tnote{c} & Number of stopover sites available (excluding wintering site)\\ \hline

$f_1$ & 0 & Reward for regime 1: detour flight\\\hline
$f_2$ & 0 & Reward for regime 2: direct flight\\\hline
$f_3$ & $\displaystyle \begin{cases} 
0.00165 \tnote{d} & \text{wintering site} \\
0.00276 \tnote{d}& \text{stopover site} \\
0 & \text{otherwise}
\end{cases}$ & Reward for regime 3: waiting\\\hline
$h_{ij}$ ($i,j \in \{1,2,3,4\}$)& {$\displaystyle \begin{cases} 
0.001 \tnote{e} & \text{wintering site} \\
0.05 \tnote{e}& \text{stopover site} \end{cases}$} & Switching cost\\\hline
\end{tabular}
\label{parameter table2}
\begin{tablenotes}
\item[a] {We} consider the spring migration of \textit{Numenius phaeopus islandicus} from {West Africa to Iceland}. According to \cite{carneiro2019faster}, the {migration distance} is around 6450 km with a perturbation of 118 km {over a span of} 70 days.

\item[b] According to  \cite{castro1989flight} and \cite{carneiro2019faster}, the {reasonable minimum and maximum ground speed of \textit{Numenius phaeopus islandicus} in spring are 277.56 km/day and 293.328 km/day.} As we fix the distance for both detour and direct migration, we need to distinguish the two regimes by velocity. {To this end, we take the biggest reasonable variance, assigning 277.56 km/day for detour and 293.328 km/day for direct migration.}

\item[c] {Although} according to \cite{carneiro2019faster}, \textit{Numenius phaeopus islandicus} do have several stopover sites to choose during the spring migration, like Ireland, western Britain, northwest France, and Portugal, the phenomena observed in {\cite{carneiro2020linking} only} involves one stopover site. Here, we put it at the {position} around $\frac{2}{3}L$ {, representing the western Europe as a whole}.

\item[d] {The reward is scaled consistently with Section 5.1.} 

\item[e] 
{Switching costs are uniformly set to approximately $10\%$ of the terminal reward, except outside the wintering and stopover sites, where they are set to infinity. Direct} flight is only allowed at the wintering site. After introducing regime $4$ in Subsection \ref{sec:5.2.2}, $h_{34}$ is the only {finite} switching cost at the stopover site.
\end{tablenotes}
\end{threeparttable}
\end{table}

\subsubsection{Diffusion guided by partial information}\label{sec:5.2.1}

In this subsection, we explore the impact of the noise in the terminal reward on the optimal value. To this end, we impose \eqref{eq: fluctuation} with 
$$g(t) = \exp \bigg(-\frac{(t-\frac{3}{4}T)^2}{2(\frac{T}{8})^2}\bigg) \quad \text{and} \quad \kappa(t) = 0.1 \sin \bigg(\frac{140 \pi}{T} t\bigg).$$
We will compute two value functions. One is the value function $V^{imperfect}$ which the individual agent uses to optimize its strategy. This can be computed using the Hamilton-Jacobi-Bellman equations, and can be interpreted as the perceived reward by the individual. The other value function is the actual value function $V^{perfect}$ that takes into account that the actual terminal reward $G(t)$ is different from the perceived reward $g(t)$ that the individual agent used in the optimization process. Hence $V^{perfect}  - V^{imperfect}$ represents the effect of information in and the mismatch between the perceived expected payoff and the actual expected payoff.  We will study the relationship between the amplitude of the fluctuation $A$ and the difference of value functions $D:=V^{perfect}  - V^{imperfect}$ at $(0,0)$ (conditioned on the individual starting at $(t,x) = (0,0)$). 

First, we perform the optimization of the individual agent based on the perceived terminal reward function $g(t)$, i.e. 
following the steps outlined in Subsection 4.4 to compute the optimal control $\alpha_{imperfect}$  by solving for the solutions $V_i(t,x)$ of the PDE systems \eqref{e.4.8} with $G(t)$ replaced by $g(t)$, and identifying the corresponding switching regions $S_{ij}$. Note that it is enough show the switching regions conditioned on the individual being states 1 (detour flight) or 3 (waiting) since the individual stays in state 2 (direct flight) once it adopts direct flight. The switching region from states $1$ to $\{2,3\}$ and from $3$ to $\{1,2\}$ are shown respectively in  Figure \ref{fig:regime1(region)_imperfect(case1)} and Figure \ref{fig:regime3_imperfect(case1)}. Hereafter we call these switching regions (which is calculated based on imperfect information) $\alpha_{imperfect}$. It is demonstrated that {most birds prefer to choose detour flight at first. After arriving at the stopover site, they tend to switch to waiting state and wait for the optimal time (approximately $t=40$) to continue their flight and finish their migration}. 

\begin{figure}[ht]
    \centering
    \begin{subfigure}[b]{0.4\textwidth}
        \includegraphics[width=\textwidth]{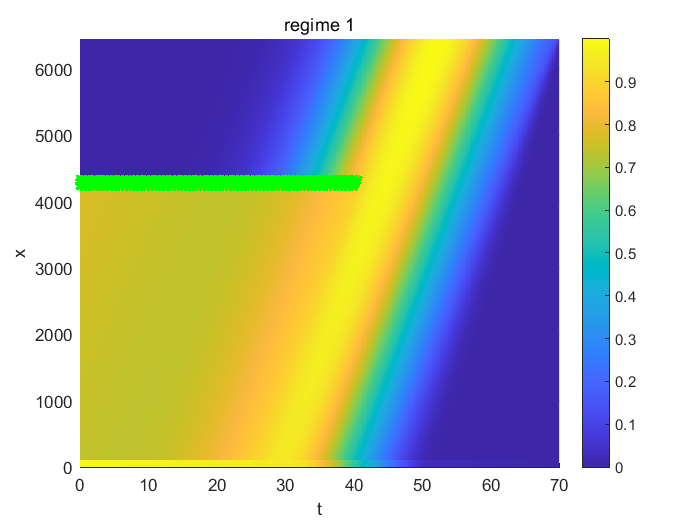}
        \caption{Regime 1: detour flight}
        \label{fig:regime1(region)_imperfect(case1)}
    \end{subfigure}
    \begin{subfigure}[b]{0.4\textwidth}
        \includegraphics[width=\textwidth]{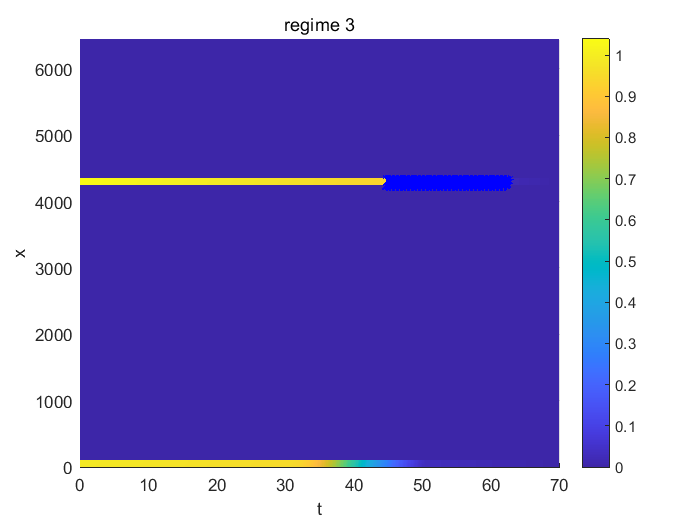}
        \caption{Regime 3: waiting}
        \label{fig:regime3_imperfect(case1)}
    \end{subfigure}
    \vfill
    \begin{subfigure}[b]{0.4\textwidth}
        \includegraphics[width=\textwidth]{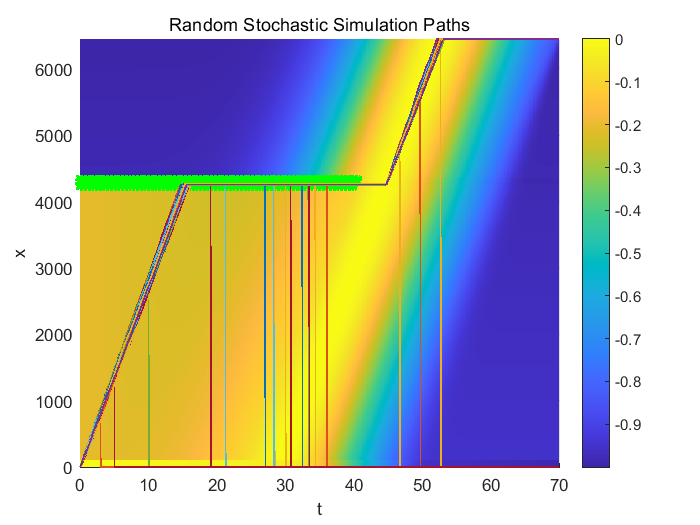}
        \caption{Stochastic simulation}
        \label{fig:regime1(simulation)_imperfect(case1)}
    \end{subfigure}
    \begin{subfigure}[b]{0.4\textwidth}
        \includegraphics[width=\textwidth]{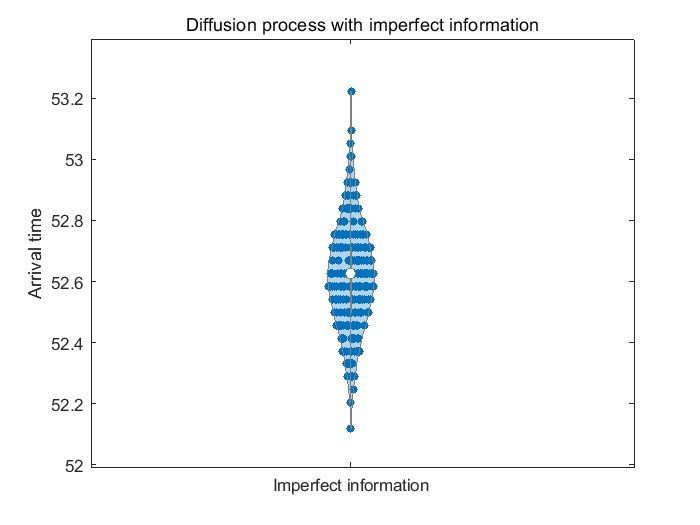}
        \caption{Distribution of arrival time}
        \label{fig:arrival_time_distribution(case1)}
    \end{subfigure}
    \caption{Result under imperfect information}
    \label{fig:Result under imperfect information}
\end{figure}

Next, we perform Monte Carlo simulation of the stochastic diffusion process with the control $\alpha_{imperfect}$, repeated $N = 500$ times, to produce a family of possible migrating routes
(see in Figure \ref{fig:regime1(simulation)_imperfect(case1)}). From these simulations, we obtain the statistics of  arrival time at $x = L$ based on control $\alpha_{imperfect}$, as shown in the violinplot in Figure \ref{fig:arrival_time_distribution(case1)}. It is observed that this distribution qualitatively aligns with the shape of the perceived terminal green-up timing $g(t)$. {As depicted in the figure, most birds choose to arrive at terminal site around the peak of green-up time to accept better terminal reward.} This shows that individual's experience and perception, embedded in partial information, determine the individuals' decision-making to a large extent.

Then, we estimate the difference of optimal value at $(0,0)$ under perfect and imperfect information. We first use the following equation to calculate $V^{imperfect}(0,0)$:
\begin{equation}
\begin{split}
    &V^{imperfect}(0,0) \\
    &\ \ = E^{\alpha_{imperfect}} \bigg[ \int_{0}^{\tau \wedge T} e^{-\beta_{i}(s, 0) s} f_{i}(s,X_{s}^{0,0,i}) \, ds -\sum_{\tau_n \leqslant \tau} e^{-\beta_{i}(\tau_{n}, 0) \tau_{n}} h_{l_{n-1},l_{n}} + \chi_{\left\{ \tau < T \right\}} e^{-\beta_{i}(\tau, 0) \tau} g(\tau) \bigg].
    \label{eq: simulation_partial}
\end{split}
\end{equation}
However, in reality, bird is led by partial information but gets actual reward as payoff, which is given by the following equation:
\begin{equation}
\begin{split}
    &V^{perfect}(0,0)\\
    &\ \ = E^{\alpha_{imperfect}} \bigg[ \int_{0}^{\tau \wedge T} e^{-\beta_{i}(s, 0) s} f_{i}(s,X_{s}^{0,0,i}) \, ds -\sum_{\tau_n \leqslant \tau} e^{-\beta_{i}(\tau_{n}, 0) \tau_{n}} h_{l_{n-1},l_{n}} + \chi_{\left\{ \tau < T \right\}} e^{-\beta_{i}(\tau, 0) \tau} G(\tau) \bigg].
    \label{eq: simulation_perfect}
\end{split}
\end{equation}
Therefore, with the distribution obtained above, we can compute the expectation of difference value of value function at $(0,0)$, denoted as $D$, with equation \eqref{eq: simulation_partial} and \eqref{eq: simulation_perfect}.
\begin{equation}
    D := V^{imperfect}(0,0) - V^{perfect}(0,0) = \frac{1}{N}\sum^{N}_{k = 1}  e^{-\beta_{i}(\tau_k, 0) \tau_k}(g(\tau_k)-G(\tau_k)) .
\end{equation}

Then we proceed to test the influence of the amplitude $A$ on the variance of the difference over all the stochastic simulations. Denote the variance as $Var$, whose calculation formula is
\begin{equation}
    Var := \frac{1}{N} \sum^{N}_{k=1} (e^{-\beta_{i}(\tau_k, 0) \tau_k}(g(\tau_k)-G(\tau_k))-D)^2.
\end{equation}

We observed that with the amplitude $A$ increasing, differences of optimal value will exhibit more variability. The result is consistent with our speculation, as shown in Figure \ref{fig:Variance_and_Amplitude(case1)}. From a biological point, this shows that with more intensive noise mixed in the terminal weather conditions, the difference of birds' expectation and reality will show extreme variability, thus increasing the volatility of getting different payoff with birds' expectation. Therefore, under this circumstance, a stopover site for obtaining perfect information becomes necessary to reduce the risk of mismatch.
\begin{figure}[htbp!]
    \centering
    \begin{minipage}{0.45\textwidth} 
        \includegraphics[width=\linewidth]{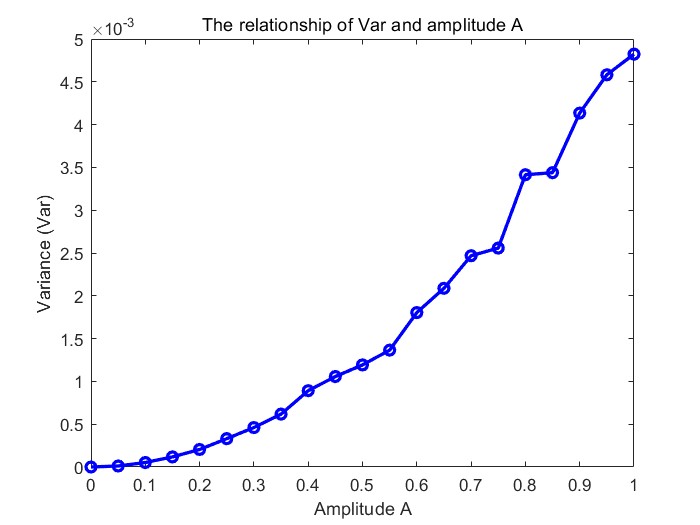}
    \caption{Variance value $Var$ under different amplitude $A$}
    \label{fig:Variance_and_Amplitude(case1)}
    \end{minipage}
    \hfill 
    \begin{minipage}{0.45\textwidth} 
        \includegraphics[width=\linewidth]{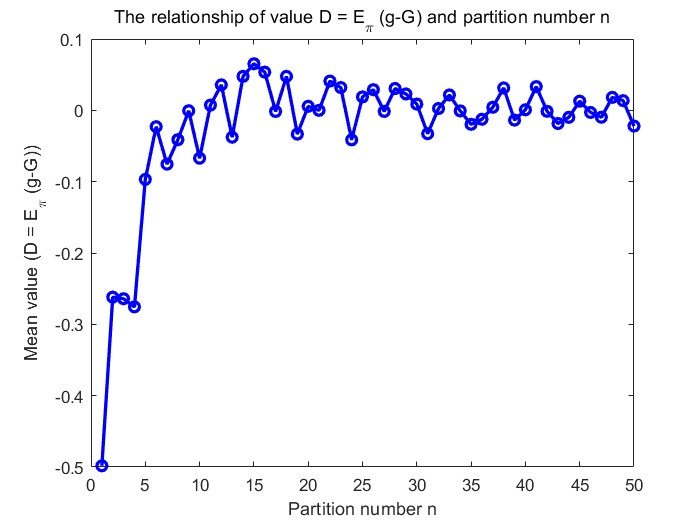}
    \caption{Mean value D under different partition number n}
    \label{fig:Mean_value_with_partition_n}
    \end{minipage}
\end{figure}
\subsubsection{When individual can gain information after using stopover site}\label{sec:5.2.2}
In this subsection, we propose a new method to  explore the effect of information on  the optimal switching strategy changes. Specifically, we will model an individual as entering a new state with  perfect information (regarding the quality of terminal site) if it chooses to rest at a particular stopover site. 

The numerical method for this case is outlined as follows. To address the state of perceiving perfect information after waiting at the stopover site, we introduce a new regime 4, marked in color magenta in graph, to the PDE system with its terminal reward replaced by perfect information $G(t)$. The parameters $v_4, \mu_4, \beta_4$ of regime 4 are the same with regime 1 (i.e., detour flight). The individual who gained information cannot lose information anymore, i.e. for switching costs, we only set $h_{34} = 0.05$ at the stopover site and set any other switching costs related to regime $4$ at any other places to $\infty$.
In this updated system, we use the same numerical approach described in Section 4 to compute the optimal control. Subsequently, we perform stochastic simulations. Initially, the bird follows the control derived under imperfect information, which corresponds to regimes 1, 2, and 3. If the bird chooses to wait at the stopover site, it switches to Regime 4 and follows the control derived under perfect information thereafter. This framework effectively implements the process of information-gathering at the stopover site.

We recall that $g(t)$ is the projection of $G(t)$ into step functions space in {\bf Scenario1}. In {\bf Scenario2}, $G(t)$ has the same shape as $g(t)$ but with the peak arriving earlier, as defined in \eqref{eq: mean_value} and \eqref{eq: moved_peak}.

\vspace{3mm}
\noindent
{\bf \textbf{Scenario1:} Projection of $G(t)$ onto step functions spaces}

We first focus on {\bf Scenario1}. Fix the interval parameter $n = 2$ and choose  
$$G(t) = \begin{cases} 
\frac{4}{T} (t-(\frac{3}{4}T-\frac{T}{4})), & \text{if } x \in [\frac{1}{2}T, \frac{3}{4}T], \\
-\frac{4}{T} (t-(\frac{3}{4}T+\frac{T}{4})), & \text{if } x \in [\frac{3}{4}T, T],\\
0, & \text{otherwise} ,
\end{cases}$$
which is a single-peak function. Then when $n=2$, function $g(t) = \frac{1}{2}$ is a constant within the interval $[\frac{1}{2}T, T]$ and 0 elsewhere. For the new PDE system, we set partial information $g(t)$ as the terminal reward for regime 1,2,3 and perfect information $G(t)$ as the terminal reward for regime 4 to simulate the process of perceiving perfect information at the stopover site. By solving the solution to the HJB system, we can get the optimal control $\alpha_{perfect}$, under which we can perform further stochastic simulations. 

\begin{figure}[ht]
    \centering
    \begin{subfigure}[b]{0.4\textwidth}
        \includegraphics[width=\textwidth]{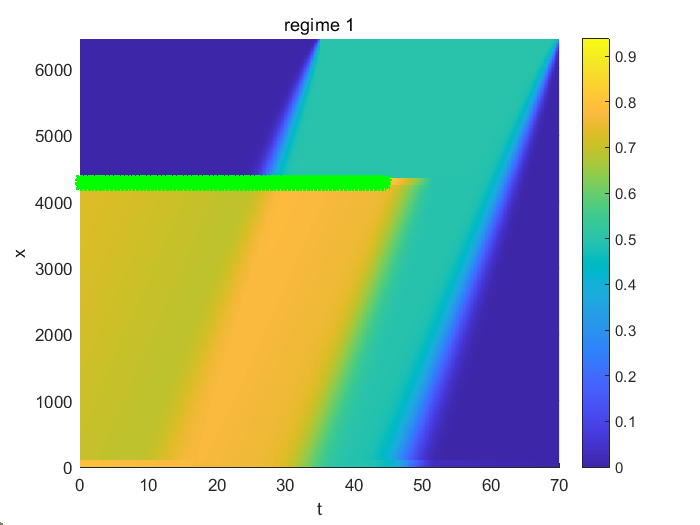}
        \caption{Regime 1: detour flight}
        \label{fig:regime1(case2_mode1)}
    \end{subfigure}
    \begin{subfigure}[b]{0.4\textwidth}
        \includegraphics[width=\textwidth]{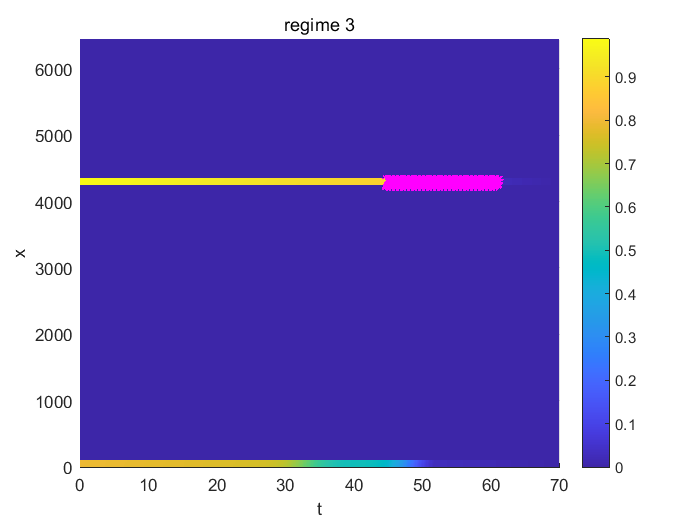}
        \caption{Regime 3: waiting}
        \label{fig:regime3(case2_mode1)}
    \end{subfigure}
    \vfill
    \begin{subfigure}[b]{0.4\textwidth}
        \includegraphics[width=\textwidth]{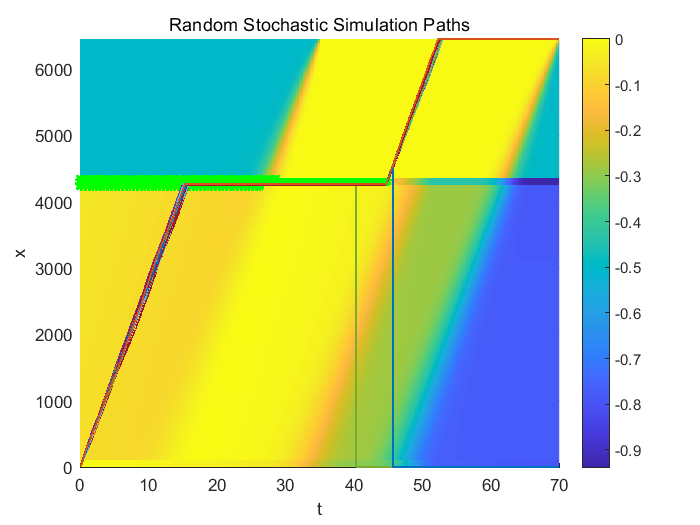}
        \caption{Stochastic simulation}
        \label{fig:regime1_simulation(case2_mode1)}
    \end{subfigure}
    \begin{subfigure}[b]{0.4\textwidth}
        \includegraphics[width=\textwidth]{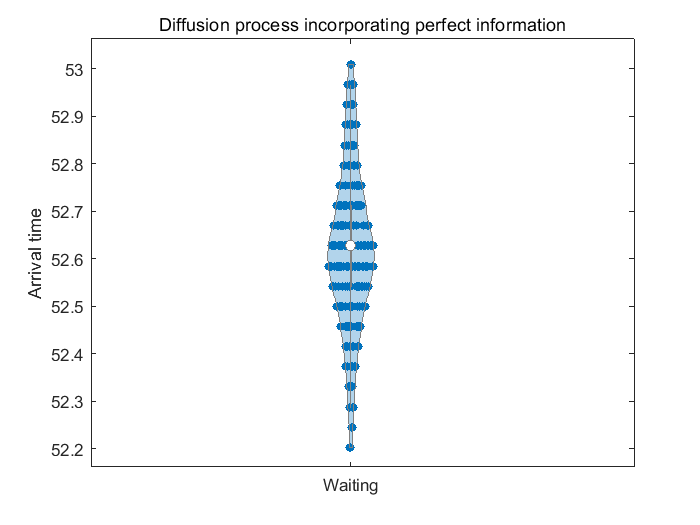}
    \caption{Arrival time distribution}
    \label{fig:arrival_time_distribution(case2_mode1)}
    \end{subfigure}
    \caption{Result incorporating perfect information (Scenario 1)}
    \label{fig:Result under imperfect information(case2mode1)}
\end{figure}
The switching region corresponding to the control $\alpha_{perfect}$ is shown in Figure \ref{fig:regime1(case2_mode1)} and Figure \ref{fig:regime3(case2_mode1)}. Combined with the simulation results in Figure \ref{fig:regime1_simulation(case2_mode1)}, it is demonstrated that birds prefer to choose to wait at the stopover site to gain an access to perfect information as it in an increased overall payoff.


The statistics of arrival time at $x = L$ is recorded as shown in the violinplot in Figure \ref{fig:arrival_time_distribution(case2_mode1)}. The shape of the distribution of arrival time aligns exactly with the shape of perfect information $G(t)$, {or otherwise, birds may choose to arrive as soon as possible due to the constant reward at the terminal site, causing a mismatch with the actual peak of green-up time of terminal reward. This shows} perceiving perfect information at the stopover site improves the individual's expected payoff by decreasing bird's risk of mismatching the green-up timing at the terminal site.

Next, we proceed to see how it will influence the expected difference of optimal value with the increase of the partition number $n$. We observe that with the increase of partition number $n$, the difference of optimal value will gradually be close to 0, as shown in Figure \ref{fig:Mean_value_with_partition_n}. This result aligns with our expectation because as $n$ increases, the partial information $g(t)$ gradually converges to perfect information $G(t)$ under certain norm. 
Consequently, this convergence naturally leads to a smaller discrepancy in the expected optimal value.


In summary, in \textbf{Scenario1}, i.e.
$$
g_n(t) = \sum^{n}_{k=0} \chi_{\left\{ t_{k} \leq t \leq t_{k+1} \right\}} \fint_{t_k}^{t_{k+1}} G\,dt, 
$$
individual bird prefers to switch to waiting at the stopover site to avoid the risk of mismatch. The improvement of expected payoff is greatest when $n=0$, i.e.  the perceived terminal reward satisfies $g(t) = \fint G\,dt$. However, as 
Additionally, the expectation of difference of optimal value gradually approaches $0$ as the $n \to \infty$, so that the perceived terminal reward approaches the actual reward $g_n(t) \to G(t)$ almost everywhere.

\vspace{3mm}
\noindent
{\bf \textbf{Scenario2:} Influence of global warming}

We take 
$$g(t) = \exp \bigg(-\frac{(t-\frac{3}{4}T)^2}{2(\frac{T}{8})^2}\bigg) \ \ \text{and} \ \ G(t) = \exp \bigg(-\frac{(t-\frac{1}{2}T)^2}{2(\frac{T}{8})^2}\bigg),$$
where the peak green-up timing is shifted earlier by $t_{move} = \frac{T}{4}$ due to the global warming effect. 

\begin{figure}[ht]
    \centering
    \begin{subfigure}[b]{0.4\textwidth}
        \includegraphics[width=\textwidth]{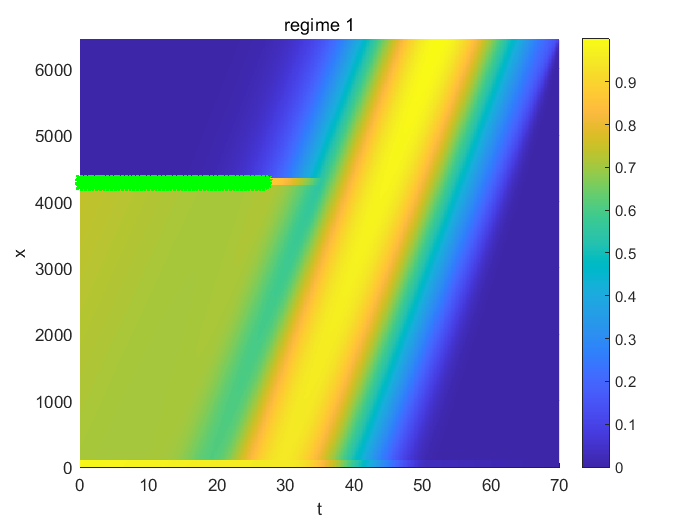}
        \caption{Regime 1: detour flight}
        \label{fig:regime1(case2_mode2)}
    \end{subfigure}
    \begin{subfigure}[b]{0.4\textwidth}
        \includegraphics[width=\textwidth]{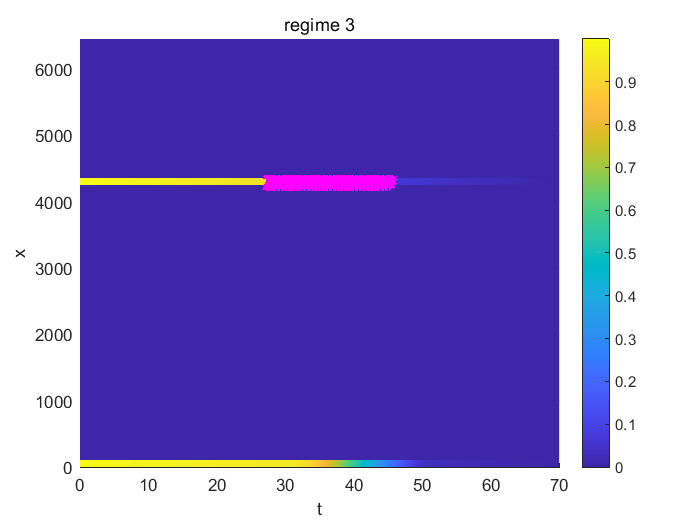}
        \caption{Regime 3: waiting}
        \label{fig:regime3(case2_mode2)}
    \end{subfigure}
    \vfill
    \begin{subfigure}[b]{0.4\textwidth}
        \includegraphics[width=\textwidth]{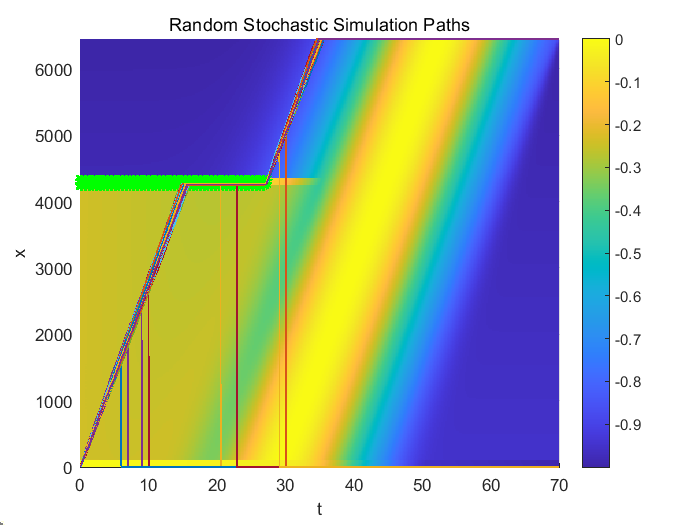}
        \caption{Stochastic simulation}
        \label{fig:regime1_simulation(case2_mode2)}
    \end{subfigure}
    \begin{subfigure}[b]{0.4\textwidth}
        \includegraphics[width=\textwidth]{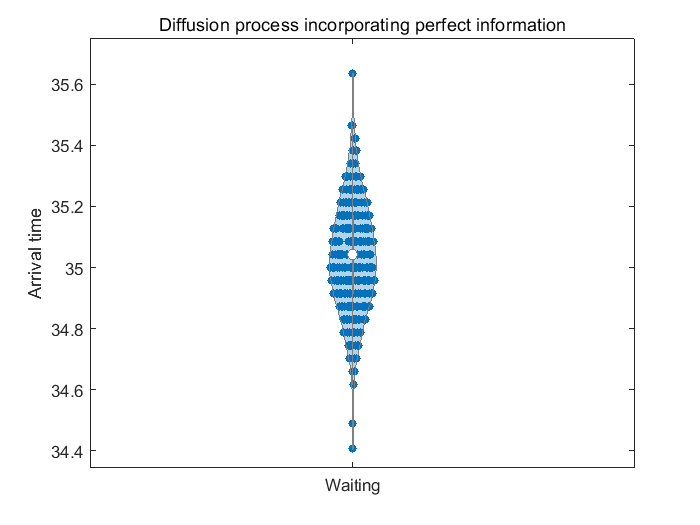}
    \caption{Comparison of arrival time }
    \label{fig:arrival_time_distribution(case2_mode2)}
    \end{subfigure}
    \caption{{Result incorporating perfect information (Scenario2)}}
    \label{fig:Result under imperfect information(case2mode2)}
\end{figure}
With the same PDE systems described in \textbf{Scenario1}, we can compute the optimal control $\alpha_{perfect}$ and its corresponding switching region, as shown in Figure \ref{fig:regime1(case2_mode2)} and Figure \ref{fig:regime3(case2_mode2)}. Then we move on to simulate the diffusion process, with the result shown in Figure \ref{fig:regime1_simulation(case2_mode2)}.

We observe that birds changes their behavior after they obtain more accurate information of the terminal reward function. Indeed, the optimal switching control $\alpha_{imperfect}$ aims at allowing individual to arrive at $x=L$ around the {\it perceived peak green-up timing} which is $t=\tfrac34 T$. This is illustrated in Figure \ref{fig:Result under imperfect information}. In contrast, individuals which gained access to better information at the stopover site deviates the optimal control $\alpha_{imperfect}$ by setting off earlier and aims at reaching the terminal site at $t=\tfrac12 T$.

{To show the difference of arrival time more clearly, we record the arrival time for the individuals shown in Figure \ref{fig:regime1_simulation(case2_mode2)} in the statistical way as before. The statistics of their arrival time is shown in Figure \ref{fig:arrival_time_distribution(case2_mode2)}, which perfectly match with our optimal control shown in Figure \ref{fig:regime1(case2_mode2)} and \ref{fig:regime3(case2_mode2)}, showing the influence of perceiving perfect information at the stopover site. With the violinplot shown in Figure \ref{fig:arrival_time_distribution(case2_mode2)}, we can observe that birds waiting at the stopover site changes their strategies and try to reach the terminal site at around $\frac{1}{2}T$, aligning with the peak time of actual terminal reward. By contrast, as shown in Figure \ref{fig:arrival_time_distribution(case1)}, birds having waited at the stopover site mostly arrived at $\frac{3}{4}T$, showing they are guided by partial information and thus would end with an intensive mismatch with the actual terminal reward.}

To summarize our result for \textbf{Scenario 2}, it is demonstrated that as a plausible response to the increased mismatch between perceived and actual environmental information (due to, e.g. increased stochasticity of the global climate and/or global warming), individual may utilize certain key stopover site to gain a better estimation of the weather condition at the terminal site, which can in turn help them decrease the mismatch in the timing of arrival with the peak green-up time at the terminal site.

In conclusion, under global climate change, it is not enough for bird population to rely solely on past experience or perception in their selection of migratory strategies. In fact, 
stopover sites play an increasingly important role for birds in interpreting the weather conditions along their migratory route, helping them to better select migratory strategies. Therefore, it is even more important to protect and preserve these key stopover locations to ensure the survival of migratory bird populations.

\end{document}